\documentclass{amsart}
\usepackage{amssymb, latexsym, amsthm}
\usepackage{url}

\DeclareMathOperator{\Aut}{\mathrm{Aut}}
\DeclareMathOperator{\closure}{\mathrm{closure}}

\DeclareMathOperator{\Iso}{\mathrm{Iso}}

\DeclareMathOperator{\iim}{\mathrm{im}}
\DeclareMathOperator{\Norm}{\mathrm{Norm}}

\DeclareMathOperator{\Orb}{\mathrm{Orb}}

\DeclareMathOperator{\spann}{\mathrm{span}}

\DeclareMathOperator{\Tor}{\mathrm{Tor}}

\begin{document} 
\bibliographystyle{alpha}

\newtheorem{theorem}{Theorem}[section]
\renewcommand{\thetheorem}{\arabic{section}.\arabic{theorem}}
\newtheorem{proposition}[theorem]{Proposition}
\renewcommand{\theproposition}{\arabic{section}.\arabic{theorem}}
\newtheorem{lemma}[theorem]{Lemma}
\renewcommand{\thelemma}{\arabic{section}.\arabic{theorem}}
\newtheorem{corollary}[theorem]{Corollary}
\renewcommand{\thecorollary}{\arabic{section}.\arabic{theorem}}
\newtheorem{conjecture}[theorem]{Conjecture}
\renewcommand{\theconjecture}{\arabic{section}.\arabic{theorem}}
\newtheorem{definition}[theorem]{Definition}
\renewcommand{\thedefinition}{\arabic{section}.\arabic{theorem}}
\newtheorem{question}[theorem]{Question}
\renewcommand{\thequestion}{\arabic{section}.\arabic{theorem}}
\newtheorem*{claim*}{Claim}
\newtheorem{claim}[theorem]{Claim}
\renewcommand{\theclaim}{\arabic{section}.\arabic{theorem}}
\newcommand{\mc}{\mathcal}
\newcommand{\A}{\mc{A}}
\newcommand{\B}{\mc{B}}
\newcommand{\cc}{\mc{C}}
\newcommand{\D}{\mc{D}}
\newcommand{\E}{\mc{E}}
\newcommand{\F}{\mc{F}}
\newcommand{\G}{\mc{G}}
\newcommand{\hH}{\mc{H}}
\newcommand{\FN}{\F_n}
\newcommand{\I}{\mc{I}}
\newcommand{\J}{\mc{J}}
\newcommand{\K}{\mc{K}}
\newcommand{\eL}{\mc{L}}
\newcommand{\M}{\mc{M}}
\newcommand{\eN}{\mc{N}}
\newcommand{\qq}{\mc{Q}} 
\newcommand{\sS}{\mc{S}}
\newcommand{\U}{\mc{U}}
\newcommand{\V}{\mc{V}}
\newcommand{\W}{\mc{W}}
\newcommand{\X}{\mc{X}}
\newcommand{\Y}{\mc{Y}}
\newcommand{\zZ}{\mc{Z}}
\newcommand{\wS}{\widetilde{S}}
\newcommand{\wT}{\widetilde{T}}
\newcommand{\C}{\mathbb{C}}
\newcommand{\R}{\mathbb{R}}
\newcommand{\N}{\mathbb{N}}
\newcommand{\Q}{\mathbb{Q}}
\newcommand{\Z}{\mathbb{Z}}
\newcommand{\aA}{\mathfrak A}
\newcommand{\bB}{\mathfrak B}
\newcommand{\ff}{\mathfrak F}
\newcommand{\fp}{\mathfrak p}
\newcommand{\fb}{f_{\beta}}
\newcommand{\fg}{f_{\gamma}}
\newcommand{\gb}{g_{\beta}}
\newcommand{\ep}{\varepsilon}
\newcommand{\vphi}{\varphi}
\newcommand{\bo}{\boldsymbol 0}
\newcommand{\ba}{\boldsymbol a}
\newcommand{\bb}{\boldsymbol b}
\newcommand{\bm}{\boldsymbol m}
\newcommand{\bgamma}{\boldsymbol \gamma}
\newcommand{\bt}{\boldsymbol t}
\newcommand{\bu}{\boldsymbol u}
\newcommand{\bv}{\boldsymbol v}
\newcommand{\bx}{\boldsymbol x}
\newcommand{\bwy}{\boldsymbol y}
\newcommand{\bxi}{\boldsymbol \xi}
\newcommand{\bbeta}{\boldsymbol \eta} 
\newcommand{\bw}{\boldsymbol w}
\newcommand{\bz}{\boldsymbol z}
\newcommand{\whG}{\widehat{G}}
\newcommand{\oK}{\overline{K}}
\newcommand{\oKt}{\overline{K}^{\times}}
\newcommand{\oQ}{\overline{\Q}}
\newcommand{\oq}{\oQ^{\times}}
\newcommand{\oQt}{\oQ^{\times}/\Tor\bigl(\oQ^{\times}\bigr)}
\newcommand{\ot}{\Tor\bigl(\oQ^{\times}\bigr)}
\newcommand{\h}{\frac12}
\newcommand{\hh}{\tfrac12}
\newcommand{\dt}{\text{\rm d}t}
\newcommand{\dx}{\text{\rm d}x}
\newcommand{\dy}{\text{\rm d}y}
\newcommand{\dmu}{\text{\rm d}\mu}
\newcommand{\dnu}{\text{\rm d}\nu}
\newcommand{\dla}{\text{\rm d}\lambda}
\newcommand{\dlav}{\text{\rm d}\lambda_v}
\newcommand{\trho}{\widetilde{\rho}}
\newcommand{\dtrho}{\text{\rm d}\widetilde{\rho}}
\newcommand{\drho}{\text{\rm d}\rho}
\newcommand{\NN}{\mathbb{N}}
\newcommand{\GG}{\mathbb{G}}
\newcommand{\QQ}{\mathbb{Q}}
\newcommand{\QQbar}{\overline{\QQ}}
\newcommand{\Kdiv}{K^{\text{div}}} 
\newcommand{\di}{\textup{div}}
\newcommand{\hg}{h_\Gamma}
\newcommand{\hk}{h_k} 
\newcommand{\tors}{\textup{tors}}
\newcommand{\vep}{\varepsilon}
\newcommand{\mcF}{\mathcal{F}}
\newcommand{\Gal}{\operatorname{Gal}}
\def\today{\number\time, \ifcase\month\or
January\or February\or March\or April\or May\or June\or
July\or August\or September\or October\or November\or December\fi
\space\number\day, \number\year}
 
\title[Weil height]{Multiplicative approximation\\by the Weil height\hfill}
\author{Robert Grizzard and Jeffrey~D.~Vaaler}
\subjclass[2010]{11G50, 11J25, 11R04, 46B04}
\keywords{Weil height, projection operators}
\thanks{Research of the second author was supported by a grant from the National Security Agency, H92380-12-1-0254.}
\address{Department of Mathematics, Lafayette College, Easton, Pennsylvania 18042 USA}
\email{grizzarr@lafayette.edu}
\address{Department of Mathematics, University of Texas, Austin, Texas 78712 USA}
\email{vaaler@math.utexas.edu}
\numberwithin{equation}{section}

\begin{abstract}   Let $K/\Q$ be an algebraic extension of fields, and let $\alpha \not= 0$ be contained
in an algebraic closure of $K$.  If $\alpha$ can be approximated by roots of numbers in $K^{\times}$ with respect to the 
Weil height, we prove that some nonzero integer power of $\alpha$ must belong to $K^{\times}$.  More generally, let 
$K_1, K_2, \dots , K_N$, be algebraic extensions of $\Q$ such that each pair of extensions includes one which is a (possibly infinite) 
Galois extension of a common subfield.  If $\alpha \not= 0$ can be approximated by a product of roots of numbers from each 
$K_n$ with respect to the Weil height, we prove that some nonzero integer power of $\alpha$ must belong to the 
multiplicative group $K_1^{\times} K_2^{\times} \cdots K_N^{\times}$.  Our proof of the more general result uses 
methods from functional analysis.
\end{abstract}

\maketitle

\section{Introduction}

Throughout this paper we work with algebraic extensions $K/\Q$ contained in a common algebraic closure $\QQbar$.  We
write $K^{\times}$ for the multiplicative group of $K$, $\Tor\bigl(K^{\times}\bigr)$ for its torsion subgroup, and
\begin{equation*}\label{height1}
h : \oq \rightarrow [0, \infty)
\end{equation*}
for the absolute, logarithmic Weil height.  
If $\Gamma$ is a subgroup of $\QQbar^\times,$ a height function relative to $\Gamma$, written
\begin{equation*}\label{height2}
h_\Gamma :  \QQbar^\times \to [0,\infty),
\end{equation*}
is defined by
\begin{equation*}\label{hgamma1}
h_\Gamma(\alpha) = \inf_{\gamma \in \Gamma} h(\alpha/\gamma).
\end{equation*}
Such height functions have been considered previously in \cite{dlmf2008} and \cite{FM2012}.  If we interpret $h(\alpha/\beta)$ as 
a semi-distance between $\alpha$ and $\beta$, then $h_\Gamma(\alpha)$ is a semi-distance from $\alpha$ to the subgroup 
$\Gamma$.  Of course, taking $\Gamma$ to be a subgroup of $\ot$, reproduces the original height function $h$.

For an algebraic extension $K/\Q$, we define
\begin{equation*}\label{hgamma4}
\Kdiv = \{\gamma \in \QQbar^\times : \text{$\gamma^m \in K^{\times}$ for some $m \not= 0$ in $\Z$}\}.
\end{equation*}
It follows that $\Kdiv$ is a divisible subgroup of $\oq$ that contains $K^\times$.  
For $\Gamma = K^{\di}$ and $\alpha$ in $\oq$, we find that
\begin{equation}\label{hgamma8}
\begin{split}
h_{\Gamma}(\alpha) 
	&= \inf \big\{h(\alpha/\gamma) : \text{$\gamma \in \oq$, and $\gamma^m \in K^{\times}$ for some $m \not= 0$ in $\Z$}\big\}\\
	&= \inf \big\{|m|^{-1} h(\alpha^m/\beta) : \text{$\beta \in K^{\times}$ and $m \not= 0$ in $\Z$}\big\}.
\end{split}
\end{equation}
Using (\ref{hgamma8}) we define a map
\begin{equation*}\label{hgamma12}
V_K : \oq \rightarrow [0, \infty),
\end{equation*}
by setting
\begin{equation*}\label{hgamma16}
V_K(\alpha) = h_{\Gamma}(\alpha).
\end{equation*}

For an algebraic extension $K/\Q$, we write $\Aut(\QQbar/K)$ for the profinite group of automorphisms of $\oQ$ that fix 
each element of $K$.  Then we define a second map
\begin{equation*}\label{hgamma20}
W_K : \oq \rightarrow [0, \infty)
\end{equation*}
by
\begin{equation}\label{widthk}
W_K(\alpha) = \sup \big\{h\left(\sigma \alpha / \alpha\right) : \text{$\sigma \in \Aut(\oQ/K)$} \big\}.
\end{equation}
As $\alpha$ in $\oq$ has only finitely many distinct conjugates over $K$, it is clear that the supremum in (\ref{widthk})
could be replaced by a maximum.
Notice that $W_K(\alpha) = 0$ if and only if all conjugates of $\alpha$ over $K$ are multiples of $\alpha$ by a root of unity, 
which is equivalent to saying that $\alpha$ lies in $K^\di$.  Indeed, if all conjugates of 
$\alpha$ over $K$ are multiples of $\alpha$ by roots of unity, it follows that some power of $\Norm_{K(\alpha)/K}(\alpha)$, 
that is, some power of the product of the conjugates of $\alpha$ over $K$, is equal to a power of $\alpha$.  

We begin with the following result, which is simple enough that we include its proof immediately.

\begin{theorem}\label{thma}
Let $K$ be a subfield of $\QQbar$, and let $\alpha \in \QQbar^\times$.  Then
\begin{equation}\label{widthk3}
\hh W_K(\alpha) \leq V_K(\alpha) \leq W_K(\alpha).
\end{equation}
\end{theorem}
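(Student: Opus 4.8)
The plan is to prove the two inequalities in (\ref{widthk3}) separately, in each case by choosing a well-adapted element and invoking three elementary properties of the Weil height: subadditivity $h(\alpha\beta)\le h(\alpha)+h(\beta)$, inversion-invariance $h(\alpha^{-1})=h(\alpha)$, and invariance under $\Aut(\QQbar/\Q)$, i.e. $h(\sigma\alpha)=h(\alpha)$ for every automorphism $\sigma$; I will also use that $h$ vanishes on $\Tor\bigl(\QQbar^\times\bigr)$.

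For the upper bound $V_K(\alpha)\le W_K(\alpha)$, I would set $d=[K(\alpha):K]$, let $\alpha=\alpha_1,\dots,\alpha_d$ be the conjugates of $\alpha$ over $K$, and put $\beta:=\Norm_{K(\alpha)/K}(\alpha)=\prod_{i=1}^d\alpha_i\in K^{\times}$. Choosing any $\gamma\in\QQbar^\times$ with $\gamma^d=\beta$, we have $\gamma\in K^{\di}$, so by (\ref{hgamma8}) it is enough to estimate $h(\alpha/\gamma)=d^{-1}h\bigl(\alpha^d/\beta\bigr)$. Writing $\alpha^d/\beta=\prod_{i=1}^d(\alpha/\alpha_i)$ and $\alpha_i=\sigma_i\alpha$ with $\sigma_i\in\Aut(\QQbar/K)$, subadditivity gives $h\bigl(\alpha^d/\beta\bigr)\le\sum_{i=1}^d h(\alpha/\sigma_i\alpha)$, and $h(\alpha/\sigma_i\alpha)=h(\sigma_i\alpha/\alpha)\le W_K(\alpha)$ by inversion-invariance and the definition (\ref{widthk}). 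Dividing by $d$ yields $h(\alpha/\gamma)\le W_K(\alpha)$, hence $V_K(\alpha)\le W_K(\alpha)$.

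For the lower bound $\hh W_K(\alpha)\le V_K(\alpha)$, I would fix an arbitrary $\gamma\in K^{\di}$, say with $\gamma^m\in K^{\times}$ for some $m\ne0$, and an arbitrary $\sigma\in\Aut(\QQbar/K)$. Since $\sigma$ fixes $\gamma^m\in K^{\times}$, the element $\sigma\gamma/\gamma$ is an $m$-th root of unity, so $h(\sigma\gamma/\gamma)=0$. Factoring $\sigma\alpha/\alpha=(\sigma\alpha/\sigma\gamma)(\sigma\gamma/\gamma)(\gamma/\alpha)$ and applying subadditivity together with $h(\sigma\alpha/\sigma\gamma)=h(\alpha/\gamma)$ (Galois-invariance) and $h(\gamma/\alpha)=h(\alpha/\gamma)$ (inversion-invariance), I obtain $h(\sigma\alpha/\alpha)\le 2\,h(\alpha/\gamma)$. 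Taking the supremum over $\sigma$ gives $W_K(\alpha)\le 2\,h(\alpha/\gamma)$, and then the infimum over $\gamma\in K^{\di}$ gives $W_K(\alpha)\le 2\,V_K(\alpha)$.

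I do not anticipate any real obstacle; the one step that takes a moment of thought is the choice of $\beta=\Norm_{K(\alpha)/K}(\alpha)$ and of its $d$-th root in the upper bound, which is precisely what lets the averaging over conjugates absorb the factor $d$ and produce the clean constant $1$. The remainder is routine manipulation of $h$.
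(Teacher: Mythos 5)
Your proposal is correct and follows essentially the same route as the paper: the upper bound via the norm element $\prod_i\alpha_i\in K^{\times}$ and subadditivity over the conjugate quotients $\alpha/\alpha_i$, and the lower bound by splitting $\sigma\alpha/\alpha$ through an arbitrary $\gamma\in K^{\di}$ (the paper phrases this with $\alpha^m/\beta$ and an extremal $\tau$, but the estimate $h(\sigma\alpha/\alpha)\le 2h(\alpha/\gamma)$ is the same). No gaps.
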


\begin{proof}
Observe that, for each $\alpha \in \QQbar^\times$ we have
\begin{equation}\label{hkinf}
V_K(\alpha) = \inf_{\gamma \in K^\di} h(\alpha/\gamma) = \inf_{\substack{\beta \in K^\times \\ 0 \not= m \in \Z}} h(\alpha/\beta^{1/m}) 
		    = \inf_{\substack{\beta \in K^\times \\ 0 \not= m \in \Z}} |m|^{-1} h(\alpha^m/\beta).
\end{equation}
Let $\tau$ be an element of $\Aut(\oQ/K)$ such that $W_{K}(\alpha) = h(\tau \alpha/\alpha)$.  Then for each 
$0 \not= m \in \Z$ and $\beta \in K$ we have
\begin{align}\label{hkinf3}
\begin{split}
W_{K}(\alpha) &= |m|^{-1} h\left(\tau(\alpha^m)/\alpha^m\right) 
		= |m|^{-1} h\left(\tau\left(\alpha^m/\beta\right) \bigl(\beta/\alpha^m\bigr)\right) \\[1em]
                        &\leq |m|^{-1} h\left(\tau\left(\alpha^m/\beta\right)\right) + |m|^{-1} h\left(\beta/\alpha^m\right)   
                 = 2 |m|^{-1} h\left(\alpha^m/\beta\right).
\end{split}                 
\end{align}
Taking the infimum on the right of (\ref{hkinf3}), as in (\ref{hkinf}), we obtain the inequality on the left of (\ref{widthk3}).

Let $\alpha = \alpha_1,\dots,\alpha_n$ denote the Galois conjugates of $\alpha$ over $K$.  Then 
$\eta = \alpha_1\cdots\alpha_n$ lies in $K^\times$, and therefore
\begin{align*}
V_K(\alpha) \leq n^{-1} h(\alpha^n/\eta) \leq n^{-1} \sum_{i=1}^n h(\alpha / \alpha_i) \leq \max_{1\leq i \leq n} h(\alpha/\alpha_i) 
		    = W_K(\alpha).
\end{align*} 
This verifies the inequality on the right of (\ref{widthk3}).
\end{proof}

Since $W_K(\alpha) = 0$ if and only if $\alpha \in K^\di$, Theorem \ref{thma} implies that $V_K(\alpha) = 0$ if and only if 
$\alpha \in K^\di$.  This result was obtained in \cite[Theorem 2]{dlmf2008} when $K$ is a number field.  Alternatively,
if $\alpha$ in $\oq$ can be approximated with respect to the Weil height by elements of $K^{\di}$, then $\alpha$ must
belong to $K^{\di}$.  We state this more precisely in the following result.

\begin{corollary}\label{corintro2}
Let $K$ be a subfield of $\QQbar$, and let $\alpha \in \QQbar^\times$.  Assume that for every $\vep>0$ there is 
an integer $m \not = 0$ and an element $\beta \in K^\times$ such that
\begin{equation}\label{hkinf5}
h(\alpha^m/\beta) < \vep |m|.
\end{equation} 
Then there is an integer $n \not = 0$ such that $\alpha^n$ belongs to $K^{\times}$.
\end{corollary}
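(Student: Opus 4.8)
The plan is to deduce Corollary \ref{corintro2} directly from Theorem \ref{thma}, observing that the hypothesis is precisely the statement that $V_K(\alpha) = 0$. Indeed, the second formula for $V_K$ in \eqref{hkinf} expresses $V_K(\alpha)$ as the infimum of the quantities $|m|^{-1} h(\alpha^m/\beta)$ over all nonzero integers $m$ and all $\beta \in K^\times$. The assumption \eqref{hkinf5} says exactly that for every $\vep > 0$ some such quantity is below $\vep$, i.e. that this infimum is $0$. Hence $V_K(\alpha) = 0$.

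Next I would invoke the left inequality $\hh W_K(\alpha) \le V_K(\alpha)$ from \eqref{widthk3}, which gives $W_K(\alpha) = 0$. By the characterization of the vanishing locus of $W_K$ recorded in the paragraph preceding Theorem \ref{thma} --- namely that $W_K(\alpha) = 0$ if and only if $\alpha \in K^{\di}$ --- we conclude $\alpha \in K^{\di}$. By the definition of $K^{\di}$, this means $\alpha^n \in K^\times$ for some nonzero integer $n$, which is the desired conclusion.

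There is essentially no obstacle here: the corollary is a direct restatement of the already-established equivalence $V_K(\alpha) = 0 \iff \alpha \in K^{\di}$, with the hypothesis merely unwinding the definition of $V_K(\alpha) = 0$. The only point requiring a word of care is making sure the quantifier structure in \eqref{hkinf5} genuinely matches ``the infimum is zero'': for each $\vep$ we are handed a single pair $(m, \beta)$ witnessing $|m|^{-1} h(\alpha^m/\beta) < \vep$, and since $\vep$ is arbitrary this is exactly the assertion that the infimum over all pairs is $0$. Once that is noted, the proof is two lines.

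\begin{proof}
By the second equality in \eqref{hkinf}, we have
\begin{equation*}
V_K(\alpha) = \inf_{\substack{\beta \in K^\times \\ 0 \not= m \in \Z}} |m|^{-1} h(\alpha^m/\beta).
\end{equation*}
The hypothesis \eqref{hkinf5} states that for every $\vep > 0$ there exist a nonzero integer $m$ and an element $\beta \in K^\times$ with $|m|^{-1} h(\alpha^m/\beta) < \vep$. Hence $V_K(\alpha) = 0$. By the left-hand inequality in \eqref{widthk3} of Theorem \ref{thma}, it follows that $W_K(\alpha) = 0$. As noted above, $W_K(\alpha) = 0$ if and only if $\alpha \in K^{\di}$; therefore $\alpha \in K^{\di}$, which by the definition of $K^{\di}$ means that $\alpha^n \in K^\times$ for some nonzero integer $n$.
\end{proof}
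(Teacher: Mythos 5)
Your proof is correct and follows essentially the same route as the paper: both deduce $V_K(\alpha)=0$ from the hypothesis (the paper by explicitly choosing an $m$-th root $\gamma$ of $\beta$, you by citing the second equality in (\ref{hkinf})), and both then conclude $\alpha\in K^{\di}$ via Theorem \ref{thma} and the characterization of the vanishing of $W_K$.
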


\begin{proof}  For $\vep > 0$, let $m \not= 0$ in $\Z$, and $\beta$ in $K^{\times}$, satisfy (\ref{hkinf5}).  Select $\gamma$
in $K^{\di}$ so that $\gamma^m = \beta$.  It follows that
\begin{equation*}\label{hkinf8}
h(\alpha/\gamma) = |m|^{-1} h(\alpha^m/\gamma^m) = |m|^{-1} h(\alpha^m/\beta) < \vep.
\end{equation*}
As $\vep > 0$ is arbitrary, we conclude that $V_K(\alpha) = 0$.  Hence $\alpha$ belongs to $K^{\di}$, and this is the 
assertion to be proved.
\end{proof}

Our main result is a generalization of Corollary \ref{corintro2} in which the field $K$ is replaced by a finite collection 
of fields $K_1, K_2, \dots , K_N$.  If $L = K_1 K_2 \cdots K_N$ is the composite field, then the product of multiplicative groups
\begin{equation*}\label{hkinf6}
K_1^{\times} K_2^{\times} \cdots K_N^{\times} \subseteq L^{\times},
\end{equation*}
and in general the subgroup $K_1^{\times} K_2^{\times} \cdots K_N^{\times}$ can have infinite index in $L^{\times}$.
For our purposes we require the divisible group
\begin{equation*}\label{hkinf7}
(K_1^{\times} K_2^{\times} \cdots K_N^{\times})^\di 
	= \{\gamma \in \oq : \text{$\gamma^m \in K_1^{\times} K_2^{\times} \cdots K_N^{\times} $ for some $m \not= 0$ in $\Z$}\}.
\end{equation*}
When working with several fields we assume that for $n_1 \not= n_2$, at least 
one of the two fields $K_{n_1}$ or $K_{n_2}$ is a (possibly infinite) Galois extension of their common 
subfield $K_{n_1} \cap K_{n_2}$.  

\begin{theorem}\label{thmintro3}  Let $K_1, K_2, \dots , K_N$, be a collection of fields such that
\begin{equation}\label{intro2}
\Q \subseteq K_n \subseteq \oQ,\quad\text{for each $n = 1, 2, \dots , N$},
\end{equation}
and for each pair of integers $n_1 \not= n_2$, either
\begin{equation}\label{intro3}
K_{n_1}/(K_{n_1} \cap K_{n_2}) \quad\text{is a (possibly infinite) Galois extension,}
\end{equation}
or
\begin{equation}\label{intro4}
K_{n_2}/(K_{n_1} \cap K_{n_2}) \quad\text{is a (possibly infinite) Galois extension.}
\end{equation}
Let $\alpha$ be an element of $\oq$, and let 
\begin{equation*}\label{intro5}
\Gamma = (K_1^\times K_2^\times \cdots K_N^\times)^\di \subseteq \QQbar^\times.
\end{equation*}
Then $h_\Gamma(\alpha) = 0$ if and only if $\alpha \in \Gamma.$
\end{theorem}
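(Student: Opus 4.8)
The plan is to reduce the statement to a question about a single bounded projection in a Banach space. Only the implication $h_\Gamma(\alpha)=0\Rightarrow\alpha\in\Gamma$ requires proof, since $\alpha\in\Gamma$ gives $h_\Gamma(\alpha)\le h(\alpha/\alpha)=0$. First I would set up an analytic model for the Weil height: by Kronecker's theorem $h$ restricts to a genuine norm on the $\Q$-vector space $\oq/\ot$, so one may extend it $\R$-linearly to $(\oq/\ot)\otimes_{\Z}\R$ and complete, obtaining a real Banach space $\X$ into which $\oq/\ot$ embeds with dense image. Writing $\hat\alpha\in\X$ for the image of $\alpha\in\oq$, we have $\|\hat\alpha-\hat\beta\|=h(\alpha/\beta)$, and $\hat\alpha=\hat\beta$ if and only if $\alpha/\beta\in\ot$. (One can realize $\X$ concretely as a subspace of an $L^1$-space, in which the operators below become conditional expectations; this is the familiar picture but is not strictly needed.) A short argument with $m$-th roots and roots of unity shows $\Gamma=(K_1^\times\cdots K_N^\times)^{\di}=K_1^{\di}\cdots K_N^{\di}$; hence, writing $\X_n$ for the image of $K_n^{\di}$ in $\X$, the image of $\Gamma$ is $\X_1+\cdots+\X_N$, and $h_\Gamma(\alpha)=\operatorname{dist}_{\X}\!\bigl(\hat\alpha,\X_1+\cdots+\X_N\bigr)$. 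So the goal becomes: if $\hat\alpha$ lies in the closure of $\X_1+\cdots+\X_N$, then $\hat\alpha=\hat\gamma$ for some $\gamma\in\Gamma$.

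Next I would introduce, for each field $K$ with $\Q\subseteq K\subseteq\oQ$, a projection $E_K$ on $\X$. The profinite group $\Aut(\oQ/K)$ acts on $\oq$ by height-preserving automorphisms, hence isometrically on $\X$, and every element of $\oq/\ot$ has finite $\Aut(\oQ/K)$-orbit; averaging over that orbit defines a $\Q$-linear map on $\oq/\ot$ sending $\hat\beta$ to $\tfrac1d\sum_{i=1}^{d}\widehat{\beta_i}$, which is the image of $\Norm_{K(\beta)/K}(\beta)^{1/d}\in K^{\di}$, where $\beta_1,\dots,\beta_d$ are the distinct conjugates of $\beta$ over $K$ and $d=[K(\beta):K]$. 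Since conjugate elements have equal height and $h$ is subadditive, this map has norm at most $1$, so it extends to a bounded idempotent $E_K$ on $\X$. By construction $E_K$ fixes $\X_K$ pointwise and carries $\oq/\ot$ into $\X_K$, whence $\operatorname{range}(E_K)=\overline{\X_K}$; moreover $E_K\hat\beta=\hat c$ with $c\in K^{\di}$, so $(I-E_K)\hat\beta=\widehat{\beta/c}$ is again the image of an element of $\oq$.

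The main obstacle is the commutation relation, and this is exactly where hypotheses (\ref{intro3})--(\ref{intro4}) are used. Fix $n_1\ne n_2$ and suppose $K_{n_1}/(K_{n_1}\cap K_{n_2})$ is Galois; then $\Aut(\oQ/K_{n_1})$ is normal in $\Aut\bigl(\oQ/(K_{n_1}\cap K_{n_2})\bigr)$, which by infinite Galois theory is the closed subgroup generated by $\Aut(\oQ/K_{n_1})$ and $\Aut(\oQ/K_{n_2})$. Consequently $\Aut(\oQ/K_{n_1})\,\Aut(\oQ/K_{n_2})$ is already a closed subgroup, equal to $\Aut(\oQ/K_{n_2})\,\Aut(\oQ/K_{n_1})$, and a short computation with Haar measures — or, since the relevant functions factor through a finite quotient, with finite groups — shows that the two multiplication maps push the product of the Haar measures forward to the same measure. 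This yields $E_{K_{n_1}}E_{K_{n_2}}=E_{K_{n_2}}E_{K_{n_1}}$ (both in fact equal $E_{K_{n_1}\cap K_{n_2}}$). Hence $E_{K_1},\dots,E_{K_N}$ pairwise commute. I expect the careful handling of these profinite groups and averaging operators, rather than the algebra that follows, to be the real work; note that only pairwise commutativity is needed, which is precisely what the pairwise hypothesis supplies.

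Finally I would run the projection argument. Set $R:=\prod_{n=1}^{N}(I-E_{K_n})$ and $Q:=I-R$; by pairwise commutativity both $R$ and $Q$ are bounded idempotents, and the telescoping identity
\[
Q \;=\; \sum_{n=1}^{N}\Bigl(\prod_{j=1}^{n-1}(I-E_{K_j})\Bigr)E_{K_n},
\]
together with commutativity, gives $\operatorname{range}(Q)=\sum_{n=1}^{N}\operatorname{range}(E_{K_n})$; since $\operatorname{range}(Q)$ is closed and each $\operatorname{range}(E_{K_n})=\overline{\X_n}$ contains $\X_n$, this common subspace is exactly $\overline{\X_1+\cdots+\X_N}$. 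Now assume $h_\Gamma(\alpha)=0$, so $\hat\alpha\in\operatorname{range}(Q)$ and $\hat\alpha=Q\hat\alpha$. Applying $(I-E_{K_1}),(I-E_{K_2}),\dots$ in turn to $\hat\alpha$ produces images $\widehat{\alpha^{(0)}}=\hat\alpha,\widehat{\alpha^{(1)}},\dots$ of elements $\alpha^{(k)}\in\oq$ (each the previous one divided by a root of a norm), and $E_{K_n}\widehat{\alpha^{(n-1)}}=\widehat{c_n}$ with $c_n\in K_n^{\di}$. Therefore $\hat\alpha=Q\hat\alpha=\sum_{n=1}^{N}\widehat{c_n}=\widehat{c_1c_2\cdots c_N}$, so $\alpha/(c_1\cdots c_N)\in\ot$ and hence $\alpha\in\ot\,K_1^{\di}\cdots K_N^{\di}=\Gamma$, as required.
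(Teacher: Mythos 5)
Your proposal is correct and follows essentially the same route as the paper: the completion of $\oQt$ under the Weil height, Galois-averaging projections $E_K$ whose ranges are the closures of the images of $K^{\di}$, commutativity of these projections deduced from normality of $\Aut(\oQ/K_{n_1})$ in $\Aut\bigl(\oQ/(K_{n_1}\cap K_{n_2})\bigr)$, and the bounded idempotent $Q=I-\prod_n(I-E_{K_n})$ onto the closed sum. The differences are cosmetic: the paper works in an explicit $L^1$-model of $\X$ and proves commutativity via a conjugation identity plus Fubini rather than your equivalent observation that $\Aut(\oQ/K_{n_1})\Aut(\oQ/K_{n_2})$ is a closed subgroup on which the convolution of the two Haar measures is Haar measure, and your telescoping evaluation of $Q\hat\alpha$ is exactly the paper's formula $W_N=I-(I-S_1)\circ\cdots\circ(I-S_N)$ applied to $f_{\alpha}$.
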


The following result is an alternative statement of Theorem \ref{thmintro3} that generalizes Corollary \ref{corintro2}.

\begin{corollary}\label{corintro4}  Let $K_1, K_2, \dots , K_N$, be a collection of fields that satisfy {\rm (\ref{intro2})},
{\rm (\ref{intro3})}, and {\rm (\ref{intro4})}.   Assume that for every $\ep > 0$, there exists an integer $m \not= 0$, and 
points $\beta_n$ in $K_n^{\times}$, for $n = 1, 2, \dots , N$, such that
\begin{equation*}\label{intro13}
h\bigl(\alpha^m \beta_1^{-1}\beta_2^{-1} \cdots \beta_N^{-1}\bigr) < \ep |m|.
\end{equation*} 
Then there exists an integer $q \not= 0$ such that $\alpha^q \in K_1^{\times} K_2^{\times} \cdots K_N^{\times}$. 
\end{corollary}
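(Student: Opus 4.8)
The plan is to reduce Corollary \ref{corintro4} to Theorem \ref{thmintro3} exactly as Corollary \ref{corintro2} was reduced to the single-field statement following Theorem \ref{thma}. So the core work — the functional-analytic argument — is entirely inside Theorem \ref{thmintro3}, which we assume. What remains is a short bookkeeping step: turning the hypothesis that $\alpha^m$ is $h$-close to a product $\beta_1\cdots\beta_N$ with $\beta_n\in K_n^\times$ into the statement that $h_\Gamma(\alpha)=0$ for $\Gamma=(K_1^\times\cdots K_N^\times)^\di$.

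First I would fix $\ep>0$ and take $m\neq 0$ and $\beta_n\in K_n^\times$ with $h(\alpha^m\beta_1^{-1}\cdots\beta_N^{-1})<\ep|m|$. Set $\beta=\beta_1\beta_2\cdots\beta_N\in K_1^\times K_2^\times\cdots K_N^\times$. Since $\Gamma$ is divisible and contains $K_1^\times\cdots K_N^\times$ (indeed $\beta\in\Gamma$ trivially, taking the exponent $1$), I can choose $\gamma\in\QQbar^\times$ with $\gamma^m=\beta$; then $\gamma^m=\beta\in K_1^\times\cdots K_N^\times$ witnesses $\gamma\in\Gamma$. Now compute, using that $h$ is multiplicative-to-additive on powers,
\begin{equation*}
h(\alpha/\gamma)=|m|^{-1}h\bigl(\alpha^m/\gamma^m\bigr)=|m|^{-1}h\bigl(\alpha^m\beta^{-1}\bigr)=|m|^{-1}h\bigl(\alpha^m\beta_1^{-1}\cdots\beta_N^{-1}\bigr)<\ep.
\end{equation*}
Since $h_\Gamma(\alpha)=\inf_{\gamma\in\Gamma}h(\alpha/\gamma)\leq h(\alpha/\gamma)<\ep$ and $\ep>0$ was arbitrary, we conclude $h_\Gamma(\alpha)=0$.

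Next I would apply Theorem \ref{thmintro3}: the fields $K_1,\dots,K_N$ satisfy \eqref{intro2}, \eqref{intro3}, \eqref{intro4} by hypothesis, so $h_\Gamma(\alpha)=0$ forces $\alpha\in\Gamma=(K_1^\times K_2^\times\cdots K_N^\times)^\di$. By the definition of the divisible hull, this means exactly that there is a nonzero integer $q$ with $\alpha^q\in K_1^\times K_2^\times\cdots K_N^\times$, which is the desired conclusion.

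I do not expect any genuine obstacle here; the only point requiring a word of care is the existence of an $m$-th root $\gamma$ of $\beta$ lying in $\Gamma$, but that is immediate since $\QQbar^\times$ is divisible (so such a $\gamma$ exists in $\QQbar^\times$) and $\gamma^m=\beta\in K_1^\times\cdots K_N^\times$ places $\gamma$ in $\Gamma$ by definition. All the difficulty of the corollary is inherited from Theorem \ref{thmintro3}.
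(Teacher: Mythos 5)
Your proposal is correct and is exactly the reduction the paper intends: Corollary \ref{corintro4} is stated as an alternative form of Theorem \ref{thmintro3}, and your argument mirrors verbatim the paper's proof of Corollary \ref{corintro2} (choose an $m$-th root $\gamma$ of $\beta_1\cdots\beta_N$, observe $\gamma\in\Gamma$, compute $h(\alpha/\gamma)=|m|^{-1}h(\alpha^m\beta_1^{-1}\cdots\beta_N^{-1})<\ep$, conclude $h_\Gamma(\alpha)=0$, and invoke the theorem). No gaps.
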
 

In order to prove Theorem \ref{thmintro3}, we exploit the structure of the quotient group
\begin{equation}\label{intro17}
\G = \oQt
\end{equation}
as a $\Q$-vector space with a (vector space) norm given by the Weil height.  We embed $\G$ isometrically in the  Banach space 
$\X$ considered in \cite{all2009}, and defined below in (\ref{ht20}).  Working in $\X$ it is natural to employ methods from functional 
analysis.  Using the hypotheses in Theorem \ref{thmintro3} we will prove that the closure of the image of the subgroup 
$\Gamma = (K_1^\times K_2^\times \cdots K_N^\times)^\di$ in $\X$ is a complemented subspace of $\X$.  The corresponding
continuous linear projection is defined using a generalization of the classical field norm map.  The
details are summarized in the next section, and at the end of that section we summarize the proof of our main result.

\section{The Banach space determined by the Weil height}\label{avb}

If $\alpha$ belongs to $\oq$, and $\zeta$ is an element of the torsion subgroup $\ot$, then it is well known that
$h(\alpha) = h(\zeta \alpha)$.  It follows that the height $h$ is constant on cosets of the quotient group (\ref{intro17}).
Therefore $h$ is well defined as a map
\begin{equation*}\label{intro25}
h : \G \rightarrow [0, \infty),
\end{equation*}
and elementary properties of the height imply that $(\alpha, \beta) \mapsto h\bigl(\alpha \beta^{-1}\bigr)$ defines a metric
on $\G$.  Moreover, if $r/s$ is a rational number, where $r$ and $s$ are relatively prime integers, $s$ is positive, and 
$\alpha$ belongs to $\G$, then
\begin{equation}\label{intro30}
(r/s, \alpha) \mapsto \alpha^{r/s}
\end{equation}
is a well defined scalar multiplication.  As discussed in \cite[section 1]{all2009}, or in \cite[section 1]{vaaler2012}, the 
group $\G$ is a vector space over the field $\Q$ of rational numbers, written multiplicatively, and with scalar 
multiplication defined by (\ref{intro30}).  The identity (see \cite[Lemma 1.5.18]{bombieri2006})
\begin{equation*}\label{intro35}
h\bigl(\alpha^{r/s}\bigr) = |r/s| h(\alpha),
\end{equation*}
implies that $\alpha \mapsto h(\alpha)$ is a norm on the $\Q$-vector space $\G$.  It follows that the height induces a norm
topology in $\G$.  Working in the quotient group $\G$, Corollary \ref{corintro2} and Theorem \ref{thmintro3} both assert that 
certain subsets of $\G$ are closed in the norm topology of $\G$ induced by the Weil height.

Let $K/\Q$ be an algebraic extension of fields, and let
\begin{equation*}\label{intro45}
\vphi : \oq \rightarrow \G = \oQt
\end{equation*}
be the canonical homomorphism.  We write
\begin{equation*}\label{intro50}
\G_K = \big\{\vphi(\alpha) : \alpha \in K^{\times}\big\}
\end{equation*}
for the image of $K^{\times}$ in $\G$, so that $\G_K$ is a subgroup of $\G$, and $\G_K$ is isomorphic to 
$K^{\times}/\Tor\bigl(K^{\times}\bigr)$.  Let $\spann_{\Q} \G_K$ denote the $\Q$-linear subspace of $\G$ generated
(multiplicatively) by the elements of $\G_K$.  It follows easily that
\begin{equation}\label{intro55}
\spann_{\Q} \G_K = \big\{\beta \in \G : \text{there exists $m \not= 0$ in $\Z$ such that $\beta^m \in \G_K$}\big\}.
\end{equation}
Let $\alpha$ be an element of $\G$.  Corollary \ref{corintro2} asserts that if $\alpha$ is a limit point of 
$\spann_{\Q} \G_K$, then $\alpha$ is an element of $\spann_{\Q} \G_K$.  Thus we get the following alternative
statement of Corollary \ref{corintro2}.

\begin{corollary}\label{corintro3}  Let $K/\Q$ be an algebraic extension of fields, and let $\spann_{\Q} \G_K$
be the $\Q$-linear subspace of $\G$ generated by $\G_K$.  Then $\spann_{\Q} \G_K$ is closed in the norm 
topology of $\G$ induced by the height.
\end{corollary}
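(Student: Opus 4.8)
The plan is to deduce Corollary~\ref{corintro3} directly from the already-established Corollary~\ref{corintro2} (equivalently, from Theorem~\ref{thma}): the statement is essentially a reformulation, and the only genuinely new ingredient is a translation between the multiplicative group $\oq$ and the $\Q$-vector space $\G = \oQt$. Throughout I would write $\vphi\colon\oq\to\G$ for the canonical homomorphism, whose kernel is $\ot$, and use that $h$ descends to a well-defined norm on $\G$ with associated metric $(\alpha,\beta)\mapsto h(\alpha\beta^{-1})$, as recorded above.

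The first step is to identify the subspace $\spann_\Q\G_K$ with the image of $K^\di$ in $\G$; that is, to show $\vphi(K^\di)=\spann_\Q\G_K$. The inclusion $\vphi(K^\di)\subseteq\spann_\Q\G_K$ is immediate from (\ref{intro55}): if $\gamma\in K^\di$ and $\gamma^m\in K^\times$ with $m\not=0$, then $\vphi(\gamma)^m=\vphi(\gamma^m)\in\G_K$. For the reverse inclusion, take $\beta\in\spann_\Q\G_K$, fix any lift $\gamma\in\oq$ with $\vphi(\gamma)=\beta$, and choose $m\not=0$ and $\delta\in K^\times$ with $\beta^m=\vphi(\delta)$. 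Then $\vphi(\gamma^m\delta^{-1})=1$, so $\gamma^m\delta^{-1}$ is a root of unity $\xi$; picking a root of unity $\zeta$ with $\zeta^m=\xi^{-1}$ gives $(\gamma\zeta)^m=\delta\in K^\times$ and $\vphi(\gamma\zeta)=\beta$, so $\beta\in\vphi(K^\di)$. This torsion adjustment is the only place where any care is needed.

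With this identification in hand, suppose $\alpha\in\G$ is a limit point of $\spann_\Q\G_K$ and fix $\widetilde\alpha\in\oq$ with $\vphi(\widetilde\alpha)=\alpha$. Given $\vep>0$, pick $\beta\in\spann_\Q\G_K=\vphi(K^\di)$ with $h(\alpha\beta^{-1})<\vep$ and write $\beta=\vphi(\gamma)$ with $\gamma\in K^\di$; choose $m\not=0$ with $\gamma^m\in K^\times$ and put $\beta'=\gamma^m\in K^\times$. Since $\vphi(\widetilde\alpha/\gamma)=\alpha\beta^{-1}$ and $h$ is a norm on $\G$, we get $h(\widetilde\alpha^m/\beta')=h((\widetilde\alpha/\gamma)^m)=|m|\,h(\widetilde\alpha/\gamma)=|m|\,h(\alpha\beta^{-1})<\vep\,|m|$, which is precisely hypothesis (\ref{hkinf5}) of Corollary~\ref{corintro2}. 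That corollary produces an integer $n\not=0$ with $\widetilde\alpha^n\in K^\times$, i.e.\ $\widetilde\alpha\in K^\di$, so $\alpha=\vphi(\widetilde\alpha)\in\vphi(K^\di)=\spann_\Q\G_K$; thus $\spann_\Q\G_K$ contains all its limit points and is closed. (Alternatively, the same inequality shows $V_K(\widetilde\alpha)=0$, whence $W_K(\widetilde\alpha)=0$ by Theorem~\ref{thma} and $\widetilde\alpha\in K^\di$ by the remark following (\ref{widthk}).) I do not expect any real obstacle here: all of the analytic content already resides in Theorem~\ref{thma}, and what remains is purely a change of language.
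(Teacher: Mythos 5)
Your proof is correct and takes essentially the same route as the paper, which treats Corollary~\ref{corintro3} as a direct reformulation of Corollary~\ref{corintro2} via the identification (\ref{intro55}); you have simply made explicit the translation $\vphi(K^{\di})=\spann_{\Q}\G_K$ and the root-of-unity adjustment that the paper leaves implicit. No gap.
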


It is possible to formulate an alternative statement of Theorem \ref{thmintro3} which also asserts that a certain
$\Q$-linear subspace of $\G$ is closed.  We prefer, however, to introduce an isomorphic copy of $\G$
that is a dense subset of the real Banach space $\X$ defined in (\ref{ht20}).  In \cite[Theorem 1]{all2009} it was shown
that the space $\X$ is isomorphic to the completion of $\G$ with respect to the norm induced 
by the height.  The methods we use here from functional analysis are more naturally employed in the Banach space $\X$.

Let $k$ be an algebraic number field of degree $d$ over $\Q$, let $v$ be a place of $k$, and write $k_v$ for the completion of
$k$ at $v$. We select an absolute value $\|\ \|_v$ from the place $v$ so that
\begin{itemize}
\item[(i)] if $v|\infty$ then $\|\ \|_v$ is the unique absolute value on $k_v$ that extends the usual absolute 
value on $\Q_{\infty} = \R$,
\item[(ii)] if $v|p$ then $\| \ \|_v$ is the unique absolute value on $k_v$ that extends the usual $p$-adic absolute value on $\Q_p$.
\end{itemize}
Let $Y$ denote the set of all places $y$ of the algebraically closed field $\oQ$, and assume that $k/\Q$ is a finite, Galois
extension.  At each place $v$ of $k$ we write
\begin{equation*}\label{un0}
Y(k,v) = \{y\in Y: y|v\}
\end{equation*}
for the subset of places in $Y$ that lie over $v$.  Clearly we can express $Y$ as the disjoint union
\begin{equation}\label{un1}
Y = \bigcup_v~Y(k,v),
\end{equation}
where the union is over all places $v$ of $k$.  In \cite[section 2]{all2009} the authors show that each subset $Y(k, v)$
can be expressed as an inverse limit of finite sets.  This determines a totally disconnected, compact, Hausdorff topology
in $Y(k, v)$.  Then it follows from (\ref{un1}) that $Y$ is a totally disconnected, locally compact, Hausdorff space.  The
topology induced in $Y$ does not depend on the number field $k$.  It is also shown in \cite[section 3]{all2009} that
for each finite, Galois extension $k/\Q$, the Galois group $\Aut(\oQ/k)$ acts transitively and continuously on the 
elements of each compact, open subset $Y(k, v)$.  Moreover (see \cite[Theorem 4]{all2009}), there exists a regular measure
$\lambda$ defined on the Borel subsets $\B$ of $Y$, such that $\lambda$ is positive (or infinite)
on nonempty open sets, finite on compact sets, and satisfies the identity $\lambda(\tau E) = \lambda (E)$ for all automorphisms
$\tau$ in $\Aut(\oQ/k)$, and all Borel subsets $E$ of $Y$.  The measure $\lambda$ is unique up to a positive multiplicative
constant.  In \cite[Theorem 5]{all2009} it is shown that $\lambda$ can be normalized so that
\begin{equation}\label{un2}
\lambda\bigl(Y(k, v)\bigr) = \dfrac{[k_v: \Q_v]}{[k: \Q]}
\end{equation}
for each finite Galois extension $k/\Q$ and each place $v$ of $k$.  More generally, if $k/\Q$ is a finite,
but not necessarily Galois extension, then the identity (\ref{un2}) continues to hold.  This is proved in \cite[Lemma 6]{vaaler2012}.

If $y$ is a place in $Y(k,v)$, we select an absolute value $\|\ \|_y$ from $y$ such that the restriction of $\|\ \|_y$ to $k$ is equal 
to $\|\ \|_v$.  As the restriction of $\|\ \|_v$ to $\Q$ is one of the usual absolute values on $\Q$, it follows that this choice
of the normalized absolute value $\|\ \|_y$ does not depend on $k$.  If $\alpha$ is a point in $\G$, we 
associate $\alpha$ with the continuous, compactly supported function
\begin{equation}\label{ht11}
y\mapsto f_{\alpha}(y) = \log \|\alpha\|_y
\end{equation}
defined on the locally compact Hausdorff space $Y$, (see \cite[equation (1.9)]{all2009}).  Each function (\ref{ht11})
belongs to the real Banach space $L^1(Y, \B, \lambda)$, where $\B$ is the $\sigma$-algebra of Borel subsets of $Y$, and
$\lambda$ is the normalized measure on $\B$ that satifies (\ref{un2}), and is invariant with respect to the natural Galois 
action on each compact, open subset $Y(k, v)$ (see \cite[Theorem 4]{all2009}).  It follows that the map
\begin{equation}\label{ht12}
\alpha\mapsto f_{\alpha}
\end{equation}
is an injective, linear transformation from the $\Q$-vector space $\G$ into the real Banach space $L^1(Y, \B, \lambda)$.  Let
\begin{equation}\label{ht13}
\F = \big\{f_{\alpha}(y): \alpha\in \G\big\} \subseteq L^1(Y, \B, \lambda)
\end{equation}
denote the image of $\G$ under the linear map (\ref{ht12}).  Then $\F$ is a $\Q$-vector space, and each element 
of $\F$ is a continuous, compactly supported function
\begin{equation*}\label{ht14}
f_{\alpha} : Y \rightarrow \R.
\end{equation*}
The map $\alpha\mapsto 2h(\alpha)$ is a norm on the $\Q$-vector space $\G$, and
$f_{\alpha}\mapsto \|f_{\alpha}\|_1$ is obviously a norm on the $\Q$-vector space $\F$.  With respect to these norms, 
the map $\alpha \mapsto f_{\alpha}$ is a linear isometry from the vector space $\G$ (written multiplicatively) onto the vector 
space $\F$ (written additively).  This follows because (see \cite[equation (1.11)]{all2009})
\begin{equation}\label{ht15}
2 h(\alpha) = \int_Y \bigl|f_{\alpha}(y)\bigr|\ \dla(y) = \|f_{\alpha}\|_1
\end{equation}
at each point $\alpha$ in $\G$.  The product formula (see \cite[equation (1.10)]{all2009}), implies that each 
function $f_{\alpha}$ in $\F$ belongs to the closed, co-dimension one subspace
\begin{equation}\label{ht20}
\X = \bigg\{F\in L^1(Y, \B, \lambda): \int_{Y}F(y)\ \dla(y) = 0\bigg\}.
\end{equation}
Then \cite[Theorem 1]{all2009} asserts that $\F$ is a dense subset of $\X$.

Let $\sS \subseteq \F$ be a subset, and let $\closure \sS \subseteq \X$ be the closure of $\sS$ in the $L^1$-norm 
topology of $\X$.  We can also form the closure of $\sS$ in the $L^1$-norm topology of $\F$, and this is clearly the 
subset $\F \cap \closure \sS$.  We say that the subset $\sS \subseteq \F$ is {\it $\F$-closed} if
\begin{equation*}\label{ht25}
\sS = \F \cap \closure \sS.
\end{equation*}
Thus $\sS$ is $\F$-closed precisely when $\sS$ is closed as a subset of $\F$, where $\F$ is given the $L^1$-norm 
topology induced, using (\ref{ht15}), by the Weil height.

We now formulate an alternative statement of Theorem \ref{thmintro3}.  Let $K/\Q$ be an algebraic field extension, and let
\begin{equation*}\label{ht30}
\F_K = \big\{f_{\alpha}(y) : \alpha \in \G_K\big\}
\end{equation*}
be the image of $\G_K$ in the $\Q$-vector space $\F$.  Then write
\begin{equation}\label{ht35}
\D_K = \spann_{\Q} \F_K
\end{equation}
for the $\Q$-linear subspace of $\F$ generated (additively) by $\F_K$.  
Each element of $\D_K$ is a finite linear combination
\begin{equation}\label{ht37}
\sum_{n = 1}^N q_n f_{\eta_n}(y),
\end{equation}
where $q_1, q_2, \dots , q_N$, are rational numbers, and $\eta_1, \eta_2, \dots , \eta_N$, are elements of $\G_K$.
If the positive integer $m$ is the least common multiple of the denominators of $q_1, q_2, \dots , q_N$, then it is clear that
(\ref{ht37}) can be written more simply as
\begin{equation}\label{ht39}
m^{-1} f_{\beta}(y),
\end{equation}
with $\beta$ in $\G_K$.  That is, (\ref{ht39}) is a generic element of the $\Q$-vector space $\D_K$, a conclusion
that also follows from (\ref{intro55}).

Because the map (\ref{ht12}) is a linear isometry from $\G$, with metric induced by the norm $\alpha \mapsto 2h(\alpha)$, 
onto $\F$, with metric induced by the $L^1$-norm, Corollary \ref{thmintro3} asserts that $\D_K \subseteq \F$ is 
$\F$-closed.  The following result is a reformulation of Theorem \ref{thmintro3}.

\begin{theorem}\label{thmintro4}  Let $K_1, K_2, \dots , K_N$, be a collection of fields such that
\begin{equation*}\label{ht60}
\Q \subseteq K_n \subseteq \oQ,\quad\text{for each $n = 1, 2, \dots , N$},
\end{equation*}
and for each pair of integers $n_1 \not= n_2$, either
\begin{equation}\label{ht63}
K_{n_1}/(K_{n_1} \cap K_{n_2}) \quad\text{is a (possibly infinite) Galois extension,}
\end{equation}
or
\begin{equation}\label{ht66}
K_{n_2}/(K_{n_1} \cap K_{n_2}) \quad\text{is a (possibly infinite) Galois extension.}
\end{equation}
For each $n = 1, 2, \dots , N$, let
\begin{equation*}\label{ht69}
\D_{K_n} = \spann_{\Q} \F_{K_n}
\end{equation*}
be the $\Q$-linear subspace generated by $\F_{K_n}$.  Then the $\Q$-linear subspace
\begin{equation}\label{ht72}
\D_{K_1} + \D_{K_2} + \cdots + \D_{K_N}
\end{equation}
is $\F$-closed.
\end{theorem}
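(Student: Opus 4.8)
The plan is to realize, for each field $K_n$, a bounded linear projection $P_n\colon\X\to\X$ whose range is $\closure\D_{K_n}$ and which is a Banach--space version of the classical field norm map; to prove that the operators $P_1,\dots,P_N$ pairwise commute, this being exactly where the hypotheses (\ref{ht63})--(\ref{ht66}) are used; and then to run the usual inclusion--exclusion calculus for commuting projections to produce a bounded projection $Q$ onto $\closure(\D_{K_1}+\cdots+\D_{K_N})$ and to show that its range meets $\F$ in precisely $\D_{K_1}+\cdots+\D_{K_N}$.

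For a single algebraic extension $K/\Q$, I would first define $P_K$ on $\F$ by averaging over the profinite group $\Aut(\oQ/K)$ against its normalized Haar measure $\mu_K$: if $\alpha=\alpha_1,\dots,\alpha_n$ are the conjugates of $\alpha$ over $K$, then
\[
P_K f_\alpha \;=\; \int_{\Aut(\oQ/K)} f_{\sigma\alpha}\, d\mu_K(\sigma) \;=\; \frac1n\sum_{i=1}^n f_{\alpha_i} \;=\; \frac1n\, f_{\Norm_{K(\alpha)/K}(\alpha)} .
\]
This is $\Q$--linear; since conjugate algebraic numbers have the same Weil height it is a contraction for the norm (\ref{ht15}) (and $\lambda$ is $\Aut(\oQ/\Q)$--invariant, so the averaging is defined on $\X$ and preserves the zero--integral condition (\ref{ht20})), hence it extends uniquely to a norm--one operator on $\X$. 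Because $\Norm_{K(\alpha)/K}(\alpha)\in K^\times$ we get $P_K(\F)\subseteq\D_K\subseteq\F$; because each $f_\beta$ with $\beta\in K^\times$ is fixed by $P_K$ we get $P_K^2=P_K$ on $\F$, hence on $\X$; and a density argument (using that $P_K$ is continuous, $P_K(\F)\subseteq\D_K$, and $\D_K\subseteq\iim P_K$) gives $\iim P_K=\closure\D_K$. Set $P_n=P_{K_n}$.

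The heart of the argument, and the step I expect to require the most care, is commutativity: $P_{n_1}P_{n_2}=P_{n_2}P_{n_1}$. Write $G_i=\Aut(\oQ/K_{n_i})$; by (possibly infinite) Galois theory $\overline{\langle G_1,G_2\rangle}=\Aut\bigl(\oQ/(K_{n_1}\cap K_{n_2})\bigr)=:H$, and (\ref{ht63})--(\ref{ht66}) say $G_1$ or $G_2$ is a closed normal subgroup of $H$. Assuming $G_1\triangleleft H$, the product set $G_1G_2$ is a compact subgroup of $H$ containing $G_1\cup G_2$, hence equals $H$; and the pushforward of $\mu_{G_1}\times\mu_{G_2}$ under $(\sigma,\tau)\mapsto\sigma\tau$ is a left--invariant probability measure on $H$, hence equals $\mu_H$, the only nontrivial input being that $\mu_{G_1}$ is invariant under conjugation by elements of $H$. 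By Fubini the pushforward under $(\sigma,\tau)\mapsto\tau\sigma$ is also $\mu_H$, so that, with $(\sigma\cdot F)(y)=F(\sigma^{-1}y)$ and hence $\sigma\cdot f_\alpha=f_{\sigma\alpha}$,
\[
P_{n_1}P_{n_2}F \;=\; \int_{G_1\times G_2}(\sigma\tau)\cdot F\; d(\mu_{G_1}\times\mu_{G_2}) \;=\; \int_{H} g\cdot F\; d\mu_H(g) \;=\; P_{n_2}P_{n_1}F
\]
for all $F\in\X$; the case $G_2\triangleleft H$ is symmetric. This is the only place in the proof where two distinct fields interact.

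Finally, commutativity makes $R:=\prod_{n=1}^N(I-P_n)$ a well--defined bounded projection, so $Q:=I-R$ is a bounded projection with $\iim Q=\kker R$. Expanding $Q=\sum_{\emptyset\neq S\subseteq\{1,\dots,N\}}(-1)^{|S|+1}\prod_{i\in S}P_i$ and noting that $\prod_{i\in S}P_i$ factors through $P_j$ for each $j\in S$ gives $\iim Q\subseteq\sum_n\iim P_n$, while $\iim P_n\subseteq\kker R$ since $(I-P_n)$ annihilates $\iim P_n$ and commutes with the other factors; hence $\iim Q=\sum_n\iim P_n=\sum_n\closure\D_{K_n}$. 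As $\iim Q$ is closed, contains every $\D_{K_n}$, and is contained in $\closure(\D_{K_1}+\cdots+\D_{K_N})$, we get $\closure(\D_{K_1}+\cdots+\D_{K_N})=\iim Q$. It then remains only to show $\F\cap\iim Q\subseteq\D_{K_1}+\cdots+\D_{K_N}$: for $f\in\F\cap\iim Q$ we have $f=Qf$, and iterating $P_n(\F)\subseteq\D_{K_n}\subseteq\F$ shows that for every nonempty $S$ and every $j\in S$,
\[
\Bigl(\prod_{i\in S}P_i\Bigr)f \;=\; P_j\Bigl(\bigl(\textstyle\prod_{i\in S\setminus\{j\}}P_i\bigr)f\Bigr) \;\in\; P_j(\F)\subseteq\D_{K_j},
\]
so $f=Qf\in\D_{K_1}+\cdots+\D_{K_N}$. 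Combined with the trivial reverse inclusion this yields $\D_{K_1}+\cdots+\D_{K_N}=\F\cap\closure(\D_{K_1}+\cdots+\D_{K_N})$, i.e.\ the subspace (\ref{ht72}) is $\F$--closed.
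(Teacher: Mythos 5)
Your proposal is correct and follows essentially the same route as the paper: the Haar-average operators $P_K=U_K$ of Section 6, commutativity deduced from normality of $\Aut(\oQ/K_{n_1})$ in $\Aut\bigl(\oQ/(K_{n_1}\cap K_{n_2})\bigr)$ together with conjugation-invariance of Haar measure as in Section 7, and the inclusion--exclusion projection $I-\prod_n(I-P_n)$ for commuting projections as in Section 4. The only organizational differences are that you establish commutativity by identifying the pushforward of $\mu_{G_1}\times\mu_{G_2}$ under multiplication with Haar measure on $G_1G_2=H$, whereas the paper uses the intertwining identity $\Phi_\sigma\circ U_K=U_{\sigma K}\circ\Phi_\sigma$ and integrates it over $\Aut(\oQ/L)$, and that you verify $\F\cap\iim Q\subseteq\D_{K_1}+\cdots+\D_{K_N}$ directly by expanding $Q$ and using $P_j(\F)\subseteq\D_{K_j}$ rather than invoking the paper's general Theorems 5.1--5.2 on continuous projections restricted to $\F$.
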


For each $n = 1, 2, \dots , N$, our proof of Theorem \ref{thmintro4} uses a system of continuous, surjective, $\Q$-linear 
projections
\begin{equation*}\label{ht79}
S_{K_n} : \F \rightarrow \D_{K_n}.
\end{equation*}
If for each $n = 1, 2, \dots , N$ we define
\begin{equation*}\label{ht83}
\E_{K_n} = \ker S_{K_n} = \big\{f_{\alpha} \in \F : S_{K_n}\bigl(f_{\alpha}\bigr) = 0\big\},
\end{equation*}
then each $\Q$-linear subspace $\E_{K_n} \subseteq \F$ is also $\F$-closed, and we have the collection of direct 
sum decompositions
\begin{equation*}\label{ht88}
\F = \D_{K_n} \oplus \E_{K_n}.
\end{equation*}
The hypotheses (\ref{ht63}) and (\ref{ht66}) imply that for $m \not= n$ the projections $S_{K_m}$ and 
$S_{K_n}$ commute.  This leads to the direct sum decomposition
\begin{equation*}\label{ht93}
\F = \bigl(\D_{K_1} + D_{K_2} + \cdots + \D_{K_N}\bigr) \oplus \bigl(\E_{K_1} \cap \E_{K_2} \cap \cdots \cap \E_{K_N}\bigr),
\end{equation*}
and to the conclusion that the $\Q$-linear subspace (\ref{ht72}) is $\F$-closed.

The $\Q$-linear subspaces $\E_{K_n}$ are also of arithmetical interest.  For $n = 1, 2, \dots , N$, let
\begin{equation*}\label{ht97}
T_{K_n} : \F \rightarrow \E_{K_n}
\end{equation*}
be the linear map defined by
\begin{equation*}\label{ht101}
T_{K_n}\bigl(f_{\alpha}\bigr) = f_{\alpha} - S_{K_n}\bigl(f_{\alpha}\bigr).
\end{equation*}
It follows in a standard manner that each map $T_{K_n}$ is a surjective, continuous, linear projection of $\F$ onto $\E_{K_n}$.
If $m \not= n$ then $S_{K_m}$ and $S_{K_n}$ commute, and this implies that $T_{K_m}$ and $T_{K_n}$ also commute.
The argument used to prove that (\ref{ht72}) is $\F$-closed can be applied to the projections $T_{K_n}$, and leads
to the conclusion that the $\Q$-linear subspace
\begin{equation*}\label{ht107}
\E_{K_1} + \E_{K_2} + \cdots + \E_{K_N}
\end{equation*}
is also $\F$-closed.  More generally, let $M$ be an integer such that $0 \le M \le N$.
Then an obvious modification to the argument outlined here shows that the $\Q$-linear subspace
\begin{equation*}\label{ht111}
\D_{K_1} + \D_{K_2} + \cdots + \D_{K_M} + \E_{K_{M+1}} + \E_{K_{M+2}} + \cdots + \E_{K_N}
\end{equation*}
is $\F$-closed.  We state and prove this more general result, which includes Theorem \ref{thmintro3} and 
Theorem \ref{thmintro4} as special cases, in Section 8.

\section{The Galois action on $\G$}\label{pl}

We assume that $K/\Q$ is an algebraic extension of fields, and we define
\begin{equation*}\label{new1}
\delta_K : \oq \rightarrow \{1, 2, \dots \}
\end{equation*}
by
\begin{equation}\label{new2}
\delta_K(\alpha) = \min\big\{\bigl[K\bigl(\alpha^m\bigr): K\bigr] : m \in \Z,\ m \not= 0\big\}.
\end{equation}

\begin{lemma}\label{lemnew1}  Let $\alpha$ be an element of $\oq$, $\zeta$ an element of $\ot$, and let $\ell \not= 0$ 
be an integer.  Then we have
\begin{equation}\label{new5}
\delta_K\bigl(\alpha^{\ell}\bigr) = \delta_K(\alpha) = \delta_K(\alpha \zeta).
\end{equation}
\end{lemma}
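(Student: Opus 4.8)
The plan is to show the two equalities in \eqref{new5} separately, both by an elementary manipulation of the defining minimum \eqref{new2}.

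\textbf{The equality $\delta_K(\alpha) = \delta_K(\alpha\zeta)$.} First I would observe that $\zeta \in \ot$ means $\zeta^t = 1$ for some positive integer $t$. For any nonzero integer $m$, set $m' = mt$; then $(\alpha\zeta)^{m'} = \alpha^{m'}\zeta^{m'} = \alpha^{m'}$, so $[K((\alpha\zeta)^{m'}):K] = [K(\alpha^{m'}):K]$ is one of the quantities over which the minimum defining $\delta_K(\alpha)$ is taken. Hence $\delta_K(\alpha) \le [K((\alpha\zeta)^{m'}):K]$, and infimizing over the exponents available for $\alpha\zeta$ shows $\delta_K(\alpha) \le \delta_K(\alpha\zeta)$. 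The reverse inequality is symmetric: $\alpha = (\alpha\zeta)\zeta^{-1}$ and $\zeta^{-1}$ is again a root of unity, so the same argument gives $\delta_K(\alpha\zeta) \le \delta_K(\alpha)$.

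\textbf{The equality $\delta_K(\alpha^\ell) = \delta_K(\alpha)$.} For one direction, note that every exponent available for $\alpha^\ell$ is also available for $\alpha$: if $m \ne 0$ then $(\alpha^\ell)^m = \alpha^{\ell m}$ with $\ell m \ne 0$, so $[K((\alpha^\ell)^m):K] = [K(\alpha^{\ell m}):K] \ge \delta_K(\alpha)$; taking the minimum over $m$ gives $\delta_K(\alpha^\ell) \ge \delta_K(\alpha)$. For the reverse, let $m_0 \ne 0$ realize the minimum in \eqref{new2} for $\alpha$, so $\delta_K(\alpha) = [K(\alpha^{m_0}):K]$. Since $\ell \ne 0$, the integer $m_0$ is divisible by no obstruction — rather, I would simply take the exponent $m_0$ for $\alpha^\ell$ after clearing $\ell$: observe $(\alpha^\ell)^{m_0}$ need not equal $\alpha^{m_0}$, so instead choose the exponent $m$ for $\alpha^\ell$ to be any nonzero integer with $\ell \mid (\text{something})$; concretely, writing $m_0 = \ell q + r$ is the wrong move. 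The clean argument: the set $\{\alpha^{\ell m} : m \in \Z \setminus \{0\}\}$ is contained in $\{\alpha^k : k \in \Z\setminus\{0\}\}$, which gives $\ge$; for $\le$, pick $m_0$ with $[K(\alpha^{m_0}):K]=\delta_K(\alpha)$ and note that $\alpha^{m_0} = (\alpha^\ell)^{m_0}$ when... this fails unless $\ell=1$. The correct fix is to use that $\alpha^{m_0 \ell} = (\alpha^\ell)^{m_0}$, so $[K((\alpha^\ell)^{m_0}):K] = [K(\alpha^{m_0\ell}):K]$, and then apply the already-proven fact that $\delta_K(\alpha^{m_0})$-type quantities are controlled: more precisely, from the first direction applied with $\alpha$ replaced by $\alpha^\ell$ and exponent $m_0$, $\delta_K(\alpha^\ell) \le [K((\alpha^\ell)^{m_0}):K] = [K(\alpha^{m_0\ell}):K]$, and I must now bound $[K(\alpha^{m_0\ell}):K]$ by $\delta_K(\alpha)$. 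This last bound is exactly what requires the main lemma content.

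\textbf{The main obstacle.} The genuinely non-formal point is that $[K(\alpha^{m\ell}):K]$ need not equal $[K(\alpha^m):K]$ in general — passing to a further power can only \emph{decrease} the degree, never increase it past the degree of a lower power in a controlled way, so I expect to need the observation that $[K(\alpha^{ab}):K] \le [K(\alpha^a):K]$ whenever $b \ne 0$, because $\alpha^{ab} \in K(\alpha^a)$. Granting this, the reverse inequality follows: $\delta_K(\alpha^\ell) \le [K(\alpha^{m_0\ell}):K] \le [K(\alpha^{m_0}):K] = \delta_K(\alpha)$, completing the proof. I would present the $\delta_K(\alpha) \le \delta_K(\alpha^\ell)$ half via the containment of exponent sets and the $\delta_K(\alpha^\ell) \le \delta_K(\alpha)$ half via the tower $K \subseteq K(\alpha^{m_0\ell}) \subseteq K(\alpha^{m_0})$, and the torsion equality via the $\zeta^t = 1$ trick as above.
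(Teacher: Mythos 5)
Your final argument is correct and rests on the same key observation as the paper's proof: since $\alpha^{ab} \in K(\alpha^a)$ for $b \neq 0$, one has $[K(\alpha^{ab}):K] \le [K(\alpha^a):K]$, so the degree $[K(\alpha^m):K]$ attains its minimum value $\delta_K(\alpha)$ at every nonzero multiple of a minimizing exponent (the paper packages this as the identity $\delta_K(\alpha) = [K(\alpha^{m_1 n_1}):K]$ for all $m_1 \neq 0$ and then matches exponents, whereas you argue the two inequalities directly, but the content is identical). One small repair for the write-up: in the torsion step, the inequality $\delta_K(\alpha) \le \delta_K(\alpha\zeta)$ does not follow merely from ``infimizing'' over the exponents $m' = mt$, since these form a proper subset of all admissible exponents; you must also invoke the tower observation above to see that restricting to multiples of $t$ does not change the infimum defining $\delta_K(\alpha\zeta)$ --- and the exploratory false starts in the middle of the power-equality paragraph should of course be excised from the final version.
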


\begin{proof}  For each $\alpha$ in $\oq$ there exists a smallest positive integer $n_1$ such that
\begin{equation*}\label{new10}
\delta_K(\alpha) = \bigl[K\bigl(\alpha^{n_1}\bigr) : K\bigr].
\end{equation*}
Hence we get
\begin{equation*}\label{new15}
\delta_K(\alpha) \le \bigl[K\bigl(\alpha^{m_1 n_1}\bigr) : K\bigr]
\end{equation*}
for all integers $m_1 \not= 0$.  However, for each integer $m_1 \not= 0$ the algebraic number 
$\alpha^{m_1 n_1}$ belongs to the field $K\bigl(\alpha^{n_1}\bigr)$.  It follows that
\begin{equation*}\label{new20}
\delta_K(\alpha) \le \bigl[K\bigl(\alpha^{m_1 n_1}\bigr) : K\bigr] \le \bigl[K\bigl(\alpha^{n_1}\bigr) : K\bigr] = \delta_K(\alpha),
\end{equation*}
and therefore
\begin{equation}\label{new25}
\delta_K(\alpha) = \bigl[K\bigl(\alpha^{m_1 n_1}\bigr) : K\bigr]
\end{equation}
for all integers $m_1 \not= 0$.  In a similar manner, if $n_2$ is the smallest positive integer such that
\begin{equation*}\label{new30}
\delta_K\bigl(\alpha^{\ell}\bigr) = \bigl[K\bigl(\alpha^{\ell n_2}\bigr) : K\bigr],
\end{equation*}
we find that
\begin{equation}\label{new35}
\delta_K\bigl(\alpha^{\ell}\bigr) = \bigl[K\bigl(\alpha^{\ell m_2 n_2}\bigr) : K\bigr]
\end{equation}
for all integers $m_2 \not= 0$.  We select $m_1 = \ell n_2$ and $m_2 = n_1$, so that
\begin{equation*}\label{new40}
m_1 n_1 = \ell m_2 n_2.
\end{equation*}
Combining (\ref{new25}) and (\ref{new35}) we get
\begin{equation*}\label{new45}
\delta_K(\alpha) = \bigl[K\bigl(\alpha^{m_1 n_1}\bigr) : K\bigr] 
					= \bigl[K\bigl(\alpha^{\ell m_2 n_2}\bigr) : K\bigr] = \delta_K\bigl(\alpha^{\ell}\bigr),
\end{equation*}
and this verifies the equality on the left of (\ref{new5}).

We have (\ref{new25}) for all integers $m_1 \not= 0$, but now we select $m_1 \not= 0$ so that $\zeta^{m_1} = 1$.  
We find that
\begin{equation}\label{new50}
\begin{split}
\delta_K(\alpha \zeta) &= \min\big\{\bigl[K\bigl((\alpha \zeta)^n\bigr): K\bigr] : n \in \Z,\ n \not= 0\big\}\\
                                        &\le \bigl[K\bigl((\alpha \zeta)^{m_1 n_1}\bigr): K\bigr]\\
                                        &= \bigl[K\bigl(\alpha^{m_1 n_1}\bigr): K\bigr]\\
                                        &= \delta_K(\alpha).
\end{split}
\end{equation}
As $\zeta^{-1}$ belongs to $\ot$, we also get
\begin{equation}\label{new55}
\delta_K(\alpha) = \delta_K\bigl((\alpha \zeta) \zeta^{-1}\bigr) \le \delta_K(\alpha \zeta).
\end{equation}
Plainly (\ref{new50}) and (\ref{new55}) confirm the equality on the right of (\ref{new5}).
\end{proof}

It follows from the equality on the right of (\ref{new5}) that $\delta_K$ is constant on cosets of the quotient group 
$\G$ defined in (\ref{intro17}).  Therefore $\delta_K$ is well defined as a map
\begin{equation*}\label{new57}
\delta_K : \G \rightarrow \{1, 2, \dots \}.
\end{equation*}
Because $\G$ is a $\Q$-vector space with scalar multiplication defined by (\ref{intro30}), the equality
on the left of (\ref{new5}) implies that
\begin{equation*}\label{new59}
\delta_K\bigl(\alpha^{r/s}\bigr) = \delta_K\bigl(\alpha^r\bigr) = \delta_K(\alpha).
\end{equation*}
Therefore $\delta_K$ is constant on $\Q$-linear subspaces of $\G$ having dimension $1$.

The profinite group $\Aut(\oQ/K)$ acts on elements of $\oq$, and $\Aut(\oQ/K)$ acts on the torsion subgroup
$\ot$.  Hence we get an action of $\Aut(\oQ/K)$ on the quotient group $\G$.  If $\alpha$ and 
$\beta$ in $\oq$ represent the same coset in $\G$, and $\tau$ belongs to $\Aut(\oQ/K)$, then $\tau \alpha$ and 
$\tau \beta$ represent the same coset in $\G$.  The remaining requirements for a group action are easily verified.
We find that for each $\tau$ in $\Aut(\oQ/K)$, and $r/s$ in $\Q$, we have
\begin{equation*}\label{new61}
\tau\bigl(\alpha^{r/s}\bigr) = \bigl(\tau \alpha\bigr)^{r/s}.
\end{equation*}
Therefore each automorphism $\tau$ in $\Aut(\oQ/K)$ acts as a linear transformation (written multiplicatively)
on the $\Q$-vector space $\G$.

If $\Aut(\oQ/K)$ acts on $\alpha$, where $\alpha$ is an element of $\oq$, then each orbit in $\oq$ has exactly 
$[K(\alpha) : K]$ distinct elements.   If $\Aut(\oQ/K)$ acts on $\alpha$, where $\alpha$ is a coset representative in
$\G$, then the orbit in $\G$ may contain fewer than $[K(\alpha) : K]$ distinct coset representatives.  We write
\begin{equation}\label{new65}
\Orb_K(\alpha) = \{\tau \alpha : \tau \in \Aut(\oQ/K)\} \subseteq \G
\end{equation}
for the set of coset representatives in the orbit of $\alpha$ under the action of $\Aut(\oQ/K)$ on $\G$.
We write $|\Orb_K(\alpha)|$ for the number of distinct coset representatives in the set (\ref{new65}), so that
\begin{equation}\label{new70}
|\Orb_K(\alpha)| \le [K(\alpha) : K].
\end{equation}

\begin{lemma}\label{lemnew2}  Let $\alpha$ be an element of $\G$.  Then we have
\begin{equation*}\label{new80}
|\Orb_K(\alpha)| = \delta_K(\alpha).
\end{equation*}
\end{lemma}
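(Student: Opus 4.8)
The plan is to show that the natural surjection from the Galois orbit $\{\tau\alpha : \tau \in \Aut(\oQ/K)\}$ in $\oq$ onto $\Orb_K(\alpha) \subseteq \G$ has fibers of a constant size, and that this constant is exactly $[K(\alpha):K]/\delta_K(\alpha)$. Fix a coset representative $\alpha \in \oq$. For $\tau, \sigma \in \Aut(\oQ/K)$, the cosets $\tau\alpha$ and $\sigma\alpha$ agree in $\G$ precisely when $\tau\alpha/\sigma\alpha \in \ot$, i.e. $\sigma^{-1}\tau\alpha = \zeta\alpha$ for some root of unity $\zeta$. First I would observe that the set $H = \{\rho \in \Aut(\oQ/K) : \rho\alpha/\alpha \in \ot\}$ is a subgroup of $\Aut(\oQ/K)$ containing $\Aut(\oQ/K(\alpha))$: if $\rho\alpha = \zeta\alpha$ and $\rho'\alpha = \zeta'\alpha$ then $\rho\rho'\alpha = \rho(\zeta'\alpha) = \rho(\zeta')\zeta\alpha$, and $\rho(\zeta')$ is again a root of unity. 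The fibers of the map $\tau\alpha \mapsto \tau\alpha \pmod{\ot}$ are exactly the right cosets of $H$ in $\Aut(\oQ/K)$ acting on $\alpha$, so $|\Orb_K(\alpha)| = [\Aut(\oQ/K) : H]$, using here that the action of $\Aut(\oQ/K)$ on $\oq$ has orbits of size $[K(\alpha):K] = [\Aut(\oQ/K):\Aut(\oQ/K(\alpha))]$ (the correspondence between the orbit and cosets of the stabilizer).

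Next I would identify $[\Aut(\oQ/K):H]$ with $\delta_K(\alpha)$. The key point is that $H$ is the stabilizer of the $1$-dimensional $\Q$-subspace $\spann_\Q\{\alpha\} \subseteq \G$ in a suitable sense; more concretely, I claim $\rho \in H$ if and only if $\rho$ fixes $\alpha^m$ for some (equivalently, by Lemma~\ref{lemnew1}-type reasoning, for the appropriate) nonzero integer $m$. Indeed, if $\rho\alpha = \zeta\alpha$ with $\zeta$ a root of unity of order $e$, then $\rho(\alpha^e) = \zeta^e\alpha^e = \alpha^e$, so $\rho$ fixes $\alpha^e \in K(\alpha^e)$, forcing $\rho \in \Aut(\oQ/K(\alpha^e))$. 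Conversely if $\rho$ fixes $\alpha^m$ then $(\rho\alpha/\alpha)^m = 1$, so $\rho\alpha/\alpha \in \ot$ and $\rho \in H$. Therefore $H = \bigcup_{m \neq 0} \Aut(\oQ/K(\alpha^m))$; since these form an increasing chain (as $\Aut(\oQ/K(\alpha^{m'})) \subseteq \Aut(\oQ/K(\alpha^m))$ whenever $K(\alpha^m) \subseteq K(\alpha^{m'})$, in particular for $m \mid m'$) and the fields $K(\alpha^m)$ all sit inside the finite extension $K(\alpha)$, the union is attained: $H = \Aut(\oQ/K(\alpha^{m_0}))$ where $m_0$ realizes the minimum in \eqref{new2}. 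Hence
\[
|\Orb_K(\alpha)| = [\Aut(\oQ/K):H] = [\Aut(\oQ/K):\Aut(\oQ/K(\alpha^{m_0}))] = [K(\alpha^{m_0}):K] = \delta_K(\alpha).
\]

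The main obstacle I anticipate is the bookkeeping around the profinite group $\Aut(\oQ/K)$ when $K/\Q$ is an infinite extension: one must be careful that the index $[\Aut(\oQ/K):\Aut(\oQ/K(\beta))]$ equals $[K(\beta):K]$ for $\beta$ algebraic over $K$ (this is standard infinite Galois theory, since $K(\beta)/K$ is finite, hence $\Aut(\oQ/K(\beta))$ is open of finite index), and that the stabilizer/orbit correspondence for the $\Aut(\oQ/K)$-action on $\oq$ and on $\G$ behaves as in the finite case. I would also want to double-check that $H$ really is closed/open so that the index is genuinely finite — this follows because $H \supseteq \Aut(\oQ/K(\alpha))$, which is already open of finite index in $\Aut(\oQ/K)$. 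Once these foundational points are in place, the identification $|\Orb_K(\alpha)| = \delta_K(\alpha)$ is immediate from the chain of equalities above, and well-definedness on cosets of $\G$ is guaranteed by Lemma~\ref{lemnew1}.
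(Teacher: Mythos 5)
Your proposal is correct, and it takes a genuinely different route from the paper. You compute $|\Orb_K(\alpha)|$ by orbit--stabilizer: you identify the ``stabilizer modulo torsion'' $H = \{\rho \in \Aut(\oQ/K) : \rho\alpha/\alpha \in \ot\}$, show $H = \bigcup_{m \neq 0}\Aut\bigl(\oQ/K(\alpha^m)\bigr)$, and argue that this directed union collapses to $\Aut\bigl(\oQ/K(\alpha^{m_0})\bigr)$ for an $m_0$ realizing the minimum in the definition of $\delta_K$, whence $|\Orb_K(\alpha)| = [\Aut(\oQ/K):H] = [K(\alpha^{m_0}):K] = \delta_K(\alpha)$ in one stroke. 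The paper instead proves two inequalities: for $\le$, it observes that $\ell \mapsto |\Orb_K(\alpha^\ell)|$ is constant (each $\tau$ acts $\Q$-linearly on $\G$) and combines this with $|\Orb_K(\alpha^\ell)| \le [K(\alpha^\ell):K]$; for $\ge$, it shows directly that the $L = \delta_K(\alpha)$ conjugates of $\alpha^{m_0}$ over $K$ remain pairwise distinct modulo $\ot$, since an identity $\alpha_i^{m_0} = \alpha_j^{m_0}\zeta$ would force $\alpha_i^{m m_0} = \alpha_j^{m m_0}$ for $\zeta^m = 1$, contradicting $[K(\alpha^{m m_0}):K] = L$. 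Your version is more structural and makes the subgroup $H$ explicit (which could be reused elsewhere), at the cost of some infinite-Galois-theory bookkeeping; the paper's version avoids any mention of stabilizers or indices of subgroups of the profinite group. Two small slips worth fixing in your write-up: the fibers of $\tau\alpha \mapsto \tau\alpha \bmod \ot$ correspond to \emph{left} cosets $\sigma H$ (harmless, since the number of left and right cosets agree), and for $m \mid m'$ the containments run $K(\alpha^{m'}) \subseteq K(\alpha^m)$ and hence $\Aut\bigl(\oQ/K(\alpha^m)\bigr) \subseteq \Aut\bigl(\oQ/K(\alpha^{m'})\bigr)$ --- the opposite of what your parenthetical states, though the directedness you actually need (both groups embed in the one for $m_1 m_2$) is correct.
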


\begin{proof}  As $\G$ is a $\Q$-vector space, and each $\tau$ in $\Aut(\oQ/K)$ acts on $\G$ as a linear transformation, the map
\begin{equation}\label{new90}
\ell \mapsto \bigl|\Orb_K\bigl(\alpha^{\ell}\bigr)\bigr|
\end{equation}
is constant on the set of integers $\ell \not= 0$.  It follows using (\ref{new70}) and (\ref{new90}) that
\begin{equation}\label{new95}
|\Orb_K(\alpha)| \le \min\big\{\bigl[K\bigl(\alpha^{\ell}\bigr) : K\bigr] : \text{$\ell \in \Z$, $\ell \not= 0$}\big\} = \delta_K(\alpha).
\end{equation}

Let $\alpha$ in $\oq$ represent a coset in $\G$, and then let $n$ be the smallest positive integer such that
\begin{equation*}\label{new98}
\delta_K(\alpha) = \bigl[K\bigl(\alpha^n\bigr) : K\bigr] = L.
\end{equation*}
As in our proof of Lemma \ref{lemnew1}, we have
\begin{equation*}\label{new100}
\delta_K(\alpha) = \bigl[K\bigl(\alpha^{m n}\bigr) : K\bigr] = L
\end{equation*}
for all integers $m \not= 0$.  Write
\begin{equation}\label{new103}
\alpha^n = \alpha_1^n, \alpha_2^n, \dots , \alpha_L^n
\end{equation}
for the distinct conjugates of $\alpha^n$ over the field $K$.  Then for each integer $m \not= 0$ the algebraic numbers
\begin{equation}\label{new105}
\alpha^{m n} = \alpha_1^{m n}, \alpha_2^{m n}, \dots , \alpha_L^{m n}
\end{equation}
are the distinct conjugates of $\alpha^{m n}$ over $K$.  We claim that the $L$ algebraic numbers (\ref{new103}) are
distinct coset representatives in $\G$.  If this is not the case, then there exist $1 \le i < j \le L$ and $\zeta$ in 
$\ot$, such that
\begin{equation}\label{new110}
\alpha_i^n = \alpha_j^n \zeta.
\end{equation}
Let $m$ be a positive integer such that $\zeta^m = 1$.  Then (\ref{new110}) implies that
\begin{equation*}\label{new115}
\alpha_i^{m n} = \bigl(\alpha_j^n \zeta\bigr)^m = \alpha_j^{m n},
\end{equation*}
and this contradicts the fact that the algebraic numbers (\ref{new105}) are distinct conjugates over $K$.  Thus
our claim that the numbers (\ref{new103}) are distinct coset representatives in $\G$ is verified.  We conclude that
\begin{equation}\label{new120}
\delta_K(\alpha) = L \le \bigl|\Orb\bigl(\alpha^n\bigr)\bigr| = |\Orb_K(\alpha)|.
\end{equation}
The lemma follows from (\ref{new95}) and (\ref{new120}).
\end{proof}

From the definition (\ref{new2}) we have $\delta_K(\alpha) = 1$ if and only if $\alpha^m$ belongs to $K$ for some 
integer $m \not= 0$.  Using (\ref{intro55}) we find that $\delta_K(\alpha) = 1$ if and only if $\alpha$ belongs to 
$\spann_{\Q} \G_K$.  Then it follows from (\ref{intro55}) and Lemma \ref{lemnew2}, that 
\begin{equation}\label{new123}
\begin{split}
\spann_{\Q} \G_K &= \big\{\alpha \in \G : \bigl|\Orb_K(\alpha)\bigr| = 1\big\}\\
			    &= \{\alpha \in \G : \text{$\tau \alpha = \alpha$ for each $\tau$ in $\Aut(\oQ/K)$}\}.
\end{split}
\end{equation}
Suppose, more generally, that $\beta$ belongs to $\G$, and
\begin{equation}\label{new125}
\Orb_K(\beta) = \big\{\beta_1, \beta_2, \dots , \beta_L\big\},
\end{equation}
where $\delta_K(\beta) = L$.  Write
\begin{equation*}\label{new127}
\gamma = \beta_1 \beta_2 \cdots \beta_L.
\end{equation*}
As each $\tau$ in $\Aut(\oQ/K)$ permutes the distinct elements of $\Orb_K(\beta)$, we find that $\tau \gamma = \gamma$
for each $\tau$ in $\Aut(\oQ/K)$.  We conclude from (\ref{new123}) that $\gamma$ belongs to $\spann_{\Q} \G_K$.

\section{Commuting projections}\label{dsf}

In this section we work in the $\Q$-vector space $\F$ defined in (\ref{ht13}).  We consider
subspaces of $\F$ which are not necessarily associated to a field extension $K/\Q$.

Let $\hH \subseteq \F$ and $\I \subseteq \F$ be $\Q$-linear subspaces such that
\begin{equation*}\label{form1}
\F = \hH \oplus \I.
\end{equation*}
Then each element $f_{\alpha}$ in $\F$ has a unique representation as
\begin{equation*}\label{form5}
f_{\alpha}(y) = f_{\beta}(y) + f_{\gamma}(y),
\end{equation*}
where $f_{\beta}$ belongs to $\hH$, and $f_{\gamma}$ belongs to $\I$. 
Let $S : \F \rightarrow \hH$ be the surjective, linear projection defined by
\begin{equation*}\label{form12}
S\bigl(f_{\alpha}\bigr) = S\bigl(f_{\beta} + f_{\gamma}\bigr) = f_{\beta}.
\end{equation*}
It follows that $\I = \ker S$.

\begin{lemma}\label{lemform1}  Let $\hH_1, \hH_2, \I_1$, and $\I_2$, be $\Q$-linear subspaces of $\F$ such that
\begin{equation*}\label{form101}
\F = \hH_1 \oplus \I_1 = \hH_2 \oplus \I_2.
\end{equation*}
Let $S_1 : \F \rightarrow \hH_1$ and $S_2 : \F \rightarrow \hH_2$ be the corresponding surjective, linear projections
such that $\I_1 = \ker S_1$ and $\I_2 = \ker S_2$.  If $S_1$ and $S_2$ commute, then
\begin{equation}\label{form103}
\F = (\hH_1 + \hH_2) \oplus (\I_1 \cap \I_2),
\end{equation}
and the surjective linear projection $W_2 : \F \rightarrow \hH_1 + \hH_2$ such that
\begin{equation*}\label{form103.5}
\iim W_2 = \hH_1 + \hH_2,\quad\text{and}\quad \ker W_2 = \I_1 \cap \I_2,
\end{equation*}
is given by
\begin{equation}\label{form104}
W_2 = S_1 + S_2 - S_1\circ S_2.
\end{equation}
Moreover, if both $S_1$ and $S_2$ are continuous, then $W_2$ is continuous.
\end{lemma}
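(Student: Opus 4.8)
The plan is to verify (\ref{form103}) and (\ref{form104}) directly from the algebra of commuting idempotents, then check continuity at the end. First I would record the basic identities: since $S_1$ and $S_2$ are linear projections, $S_1^2 = S_1$ and $S_2^2 = S_2$; since $\iim S_i = \hH_i$ and $\ker S_i = \I_i$, we have $S_i(f_\alpha) = f_\alpha$ exactly when $f_\alpha \in \hH_i$, and $S_i(f_\alpha) = 0$ exactly when $f_\alpha \in \I_i$. The hypothesis gives $S_1 \circ S_2 = S_2 \circ S_1$.

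Next I would show $W_2 := S_1 + S_2 - S_1 \circ S_2$ is a linear projection with the claimed image and kernel. Linearity is immediate. To see $W_2^2 = W_2$, expand $W_2 \circ W_2$ using bilinearity of composition, repeatedly substituting $S_1^2 = S_1$, $S_2^2 = S_2$, and $S_1 \circ S_2 = S_2 \circ S_1$; after collecting terms the cross-terms cancel and one is left with $S_1 + S_2 - S_1 \circ S_2$. (Equivalently, note $I - W_2 = (I - S_1)\circ(I - S_2)$, and $(I-S_1)$, $(I-S_2)$ are commuting idempotents, hence so is their product, hence so is $W_2$.) For the image: if $f_\alpha = f_\beta + f_\gamma$ with $f_\beta \in \hH_1$, $f_\gamma \in \hH_2$, then $S_1(f_\beta) = f_\beta$ and $S_2(f_\gamma) = f_\gamma$, and a short computation using $S_1 \circ S_2 = S_2 \circ S_1$ gives $W_2(f_\alpha) = f_\alpha$; conversely $\iim W_2 \subseteq \hH_1 + \hH_2$ is clear from the formula. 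So $\iim W_2 = \hH_1 + \hH_2$ and $W_2$ restricts to the identity there, which also shows the sum in (\ref{form103}) fills out the first summand. For the kernel: $W_2(f_\alpha) = 0$ means $(I - S_1)(I - S_2)(f_\alpha) = f_\alpha$; applying $S_2$ and using $S_2(I - S_2) = 0$ together with commutativity shows $S_2(f_\alpha) = 0$, i.e. $f_\alpha \in \I_2$, and then $W_2(f_\alpha) = S_1(f_\alpha)$, so $S_1(f_\alpha) = 0$ as well, giving $f_\alpha \in \I_1 \cap \I_2$. The reverse inclusion $\I_1 \cap \I_2 \subseteq \ker W_2$ is immediate from the formula. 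Finally, $(\hH_1 + \hH_2) \cap (\I_1 \cap \I_2) = 0$: any $f_\alpha$ in the intersection satisfies $W_2(f_\alpha) = f_\alpha$ (image) and $W_2(f_\alpha) = 0$ (kernel), so $f_\alpha = 0$. Together with $\ker W_2 = \I_1 \cap \I_2$ and $W_2$ a projection onto $\hH_1 + \hH_2$, this yields the direct sum decomposition (\ref{form103}).

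For the continuity claim: if $S_1$ and $S_2$ are continuous, then $S_1 \circ S_2$ is continuous as a composition of continuous maps, and $W_2 = S_1 + S_2 - S_1 \circ S_2$ is continuous as a linear combination of continuous maps (addition and scalar multiplication being continuous operations on the normed space). This is routine.

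I do not expect a genuine obstacle here; the content is purely formal manipulation of commuting idempotents in a vector space. The only point requiring minor care is bookkeeping in the expansion of $W_2^2$ and in the kernel computation, where the commutativity hypothesis must be invoked at exactly the right spots — but phrasing everything through the factorization $I - W_2 = (I - S_1)\circ(I - S_2)$ sidesteps most of the bookkeeping and makes the idempotence, image, and kernel statements transparent.
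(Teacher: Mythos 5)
Your proof is correct and follows essentially the same route as the paper's: show that $W_2 = S_1 + S_2 - S_1\circ S_2$ is an idempotent and then identify its image as $\hH_1 + \hH_2$ and its kernel as $\I_1 \cap \I_2$. Your use of the factorization $I - W_2 = (I - S_1)\circ(I - S_2)$ is a clean way to package the idempotence check and the kernel computation (the paper instead computes $f_{\alpha} - W_2\bigl(f_{\alpha}\bigr)$ explicitly using the two decompositions of $f_{\alpha}$), but the substance is the same.
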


\begin{proof}  If $S_1$ and $S_2$ commute, then it is easy to verify that the linear transformation $W_2$ defined by
(\ref{form104}) satisfies $W_2^2 = W_2$.  Therefore $W_2$ {\it is} a linear projection, and it follows that
\begin{equation*}\label{from105}
\F = \iim W_2 \oplus \ker W_2.
\end{equation*}
It remains to identify the image and kernel of $W_2$.

If $f_{\beta_1}$ belongs to $\hH_1$ and $f_{\beta_2}$ belongs to $\hH_2$, we find that
\begin{align*}\label{form106}
\begin{split}
W_2\bigl(f_{\beta_1}  + f_{\beta_2}\bigr) 
   &= f_{\beta_1} + S_1\bigl(f_{\beta_2}\bigr) + S_2\bigl(f_{\beta_1}\bigr) 
   				+ f_{\beta_2} - S_2\bigl(f_{\beta_1}\bigr) - S_1\bigl(f_{\beta_2}\bigr)\\
   &=	f_{\beta_1} + f_{\beta_2}.	 
\end{split}
\end{align*}
It follows that
\begin{equation}\label{form107}
\hH_1 + \hH_2 \subseteq \iim W_2.
\end{equation}

Let $f_{\alpha}$ be an element of $\F$ such that
\begin{equation}\label{form109}
f_{\alpha} = f_{\beta_1} + f_{\gamma_1},\quad\text{where}\ f_{\beta_1} \in \hH_1,\ \text{and}\ f_{\gamma_1} \in \I_1,
\end{equation}
and
\begin{equation}\label{form111}
f_{\alpha} = f_{\beta_2} + f_{\gamma_2},\quad\text{where}\ f_{\beta_2} \in \hH_2,\ \text{and}\ f_{\gamma_2} \in \I_2.
\end{equation}
We find that
\begin{align}\label{form113}
\begin{split}
W_2\bigl(f_{\alpha}\bigr) &= S_1\bigl(f_{\alpha}\bigr) + S_2\bigl(f_{\alpha}\bigr) - S_1\bigl(S_2\bigl(f_{\alpha}\bigr)\bigr)\\
	&= f_{\beta_1} + f_{\beta_2} - S_1\bigl(f_{\beta_2}\bigr),
\end{split}
\end{align}
and because $S_1$ and $S_2$ commute, we also get
\begin{align}\label{form115}
\begin{split}
W_2\bigl(f_{\alpha}\bigr) &= S_1\bigl(f_{\alpha}\bigr) + S_2\bigl(f_{\alpha}\bigr) - S_2\bigl(S_1\bigl(f_{\alpha}\bigr)\bigr)\\
	&= f_{\beta_1} + f_{\beta_2} - S_2\bigl(f_{\beta_1}\bigr).
\end{split}
\end{align}
Both (\ref{form113}) and (\ref{form115}) show that $W_2\bigl(f_{\alpha}\bigr)$ belongs to the subspace $\hH_1 + \hH_2$, 
and therefore
\begin{equation}\label{form117}
\iim W_2 \subseteq \hH_1 + \hH_2.
\end{equation}
Now (\ref{form107}) and (\ref{form117}) imply that
\begin{equation*}\label{form119}
\iim W_2 = \hH_1 + \hH_2.
\end{equation*}

We continue to assume that $f_{\alpha}$ is given by (\ref{form109}) and by (\ref{form111}). 
Let $T_1 : \F \rightarrow \I_1$ and $T_2 : \F \rightarrow \I_2$ be the surjective linear projections defined by
\begin{equation*}\label{form125}
T_1\bigl(f_{\alpha}\bigr) = f_{\alpha} - S_1\bigl(f_{\alpha}\bigr) = f_{\gamma_1},
			\quad\text{and}\quad T_2\bigl(f_{\alpha}\bigr) = f_{\alpha} - S_2\bigl(f_{\alpha}\bigr) = f_{\gamma_2}.
\end{equation*}
Using (\ref{form113}) we find that
\begin{equation}\label{form127}
f_{\alpha} - W_2\bigl(f_{\alpha}\bigr) = f_{\gamma_1} - f_{\beta_2} + S_1\bigl(f_{\beta_2}\bigr) 
				= f_{\gamma_1} - T_1\bigl(f_{\beta_2}\bigr)
\end{equation}
belongs to $\I_1$, and
\begin{equation}\label{form129}
f_{\alpha} - W_2\bigl(f_{\alpha}\bigr) = f_{\gamma_2} - f_{\beta_1} + S_2\bigl(f_{\beta_1}\bigr) 
				= f_{\gamma_2} - T_2\bigl(f_{\beta_1}\bigr)
\end{equation}
belongs to $\I_2$.  If $W_2\bigl(f_{\alpha}\bigr) = 0$, then (\ref{form127}) and (\ref{form129}) show that
\begin{equation}\label{form131}
\ker W_2 \subseteq \I_1 \cap \I_2.
\end{equation}
On the other hand, if $f_{\alpha}$ belongs to $\I_1 \cap \I_2$, then $f_{\beta_1} = f_{\beta_2} = 0$, and both
(\ref{form127}) and (\ref{form129}) imply that $W_2\bigl(f_{\alpha}\bigr) = 0$.  Thus we get
\begin{equation}\label{form133}
\I_1 \cap \I_2 \subseteq \ker W_2.
\end{equation}
From (\ref{form131}) and (\ref{form133}) we conclude that
\begin{equation*}\label{form135}
\ker W_2 = \I_1 \cap \I_2.
\end{equation*}
This completes the proof that (\ref{form103}) holds.

The last assertion of the lemma is obvious.
\end{proof}

\begin{theorem}\label{thmform1}  Let
\begin{equation*}\label{form137}
\hH_1, \hH_2, \dots , \hH_N,\quad\text{and}\quad \I_1, \I_2, \dots , \I_N,
\end{equation*}
be a collection of $\Q$-linear subspaces of $\F$, that satisfy
\begin{equation*}\label{form139}
\F = \hH_n \oplus \I_n,\quad\text{for each $n = 1, 2, \dots , N$.}
\end{equation*}
For each $n = 1, 2, \dots , N$, let $S_n : \F \rightarrow \hH_n$ be the unique, surjective linear projection such that 
$\I_n = \ker S_n$.  Assume that for $m \not= n$, the linear projections $S_m$ and $S_n$ commute.  Then we have
\begin{equation}\label{form145}
\F = (\hH_1 + \hH_2 + \cdots + \hH_N) \oplus (\I_1 \cap \I_2 \cap \cdots \cap \I_N),
\end{equation}
and the unique, surjective linear projection
\begin{equation*}\label{form147}
W_N : \F \rightarrow \hH_1 + \hH_2 + \cdots + \hH_N
\end{equation*}
such that 
\begin{equation*}\label{form149}
\ker W_N = \I_1 \cap \I_2 \cap \cdots \cap \I_N,
\end{equation*}
is given by
\begin{equation}\label{form151}
W_N = I - (I - S_1)\circ(I - S_2)\circ \cdots \circ (I - S_N),
\end{equation}
where $I : \F \rightarrow \F$ is the identity transformation.  Moreover, if each of the projections $S_1, S_2, \dots , S_N$, is
continuous, then $W_N$ is continuous.
\end{theorem}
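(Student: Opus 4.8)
The plan is to argue by induction on $N$, with Lemma \ref{lemform1} furnishing the inductive step. The base case $N = 1$ is immediate: then $W_1 = I - (I - S_1) = S_1$, which by hypothesis is the unique surjective linear projection with image $\hH_1$ and kernel $\I_1$. So suppose $N \geq 2$ and that the theorem holds with $N$ replaced by $N - 1$. Set
\begin{equation*}
W_{N-1} = I - (I - S_1)\circ (I - S_2)\circ\cdots\circ(I - S_{N-1}),
\end{equation*}
so that, by the inductive hypothesis, $W_{N-1}$ is the surjective linear projection with $\iim W_{N-1} = \hH_1 + \hH_2 + \cdots + \hH_{N-1}$ and $\ker W_{N-1} = \I_1 \cap \I_2 \cap \cdots \cap \I_{N-1}$, and moreover $\F = (\hH_1 + \cdots + \hH_{N-1}) \oplus (\I_1 \cap \cdots \cap \I_{N-1})$.

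Next I would check that $W_{N-1}$ commutes with $S_N$; this is the only place the pairwise commutativity hypothesis is used. For each $i < N$, the identity $S_i\circ S_N = S_N\circ S_i$ gives $(I - S_i)\circ S_N = S_N - S_i\circ S_N = S_N - S_N\circ S_i = S_N\circ(I - S_i)$, so each factor $I - S_i$ commutes with $S_N$; hence the composition $(I - S_1)\circ\cdots\circ(I - S_{N-1})$ commutes with $S_N$, and therefore so does $W_{N-1}$. Now I would apply Lemma \ref{lemform1} to the two decompositions $\F = (\hH_1 + \cdots + \hH_{N-1}) \oplus (\I_1 \cap \cdots \cap \I_{N-1})$ and $\F = \hH_N \oplus \I_N$, with the commuting projections $W_{N-1}$ and $S_N$. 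The lemma yields $\F = \big((\hH_1 + \cdots + \hH_{N-1}) + \hH_N\big) \oplus \big((\I_1 \cap \cdots \cap \I_{N-1}) \cap \I_N\big)$, which is precisely (\ref{form145}), and it identifies the corresponding projection as $W_{N-1} + S_N - W_{N-1}\circ S_N$.

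It then remains to recognize this projection as the operator $W_N$ of (\ref{form151}), which is a short computation:
\begin{align*}
W_{N-1} + S_N - W_{N-1}\circ S_N &= S_N + W_{N-1}\circ(I - S_N)\\
  &= S_N + (I - S_N) - (I - S_1)\circ\cdots\circ(I - S_{N-1})\circ(I - S_N)\\
  &= I - (I - S_1)\circ\cdots\circ(I - S_N).
\end{align*}
Since a linear projection is uniquely determined by its image and kernel once the ambient space splits as their direct sum, this shows $W_N$ is the unique surjective linear projection with image $\hH_1 + \cdots + \hH_N$ and kernel $\I_1 \cap \cdots \cap \I_N$. Finally, if each $S_n$ is continuous, then $W_N$ is continuous, being assembled from the $S_n$ by finitely many compositions and linear combinations; alternatively, continuity propagates through the induction by the last assertion of Lemma \ref{lemform1}. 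I expect the commutativity verification of the second paragraph to be the crux, but it is routine once one observes that $I - S_i$ commutes with every operator that commutes with $S_i$; everything else is bookkeeping and the algebraic identity displayed above.
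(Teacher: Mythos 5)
Your proof is correct and follows essentially the same route as the paper's: induction on $N$ with Lemma \ref{lemform1} supplying the inductive step, the observation that each $I - S_i$ commutes with $S_N$ so that $W_{N-1}$ does too, and the same algebraic identity converting $W_{N-1} + S_N - W_{N-1}\circ S_N$ into the product formula (\ref{form151}). The only cosmetic difference is that the paper takes $N=2$ as the base case and steps from $N$ to $N+1$ rather than from $N-1$ to $N$.
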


\begin{proof}  We argue by induction on $N$.  If $N = 2$ then the result is exactly the statement of Lemma \ref{lemform1},
which has already been proved.  Therefore we assume that the hypotheses and conclusion of Theorem \ref{thmform1} hold
with $2 \le N$, and we assume that $\hH_{N+1}$ and $\I_{N+1}$ are $\Q$-linear subspaces of $\F$ such that
\begin{equation}\label{form155}
\F = \hH_{N+1} \oplus \I_{N+1}.
\end{equation}  
We write $S_{N+1} : \F \rightarrow \hH_{N+1}$ for the unique, surjective linear projection such that $\ker S_{N+1} = \I_{N+1}$,
and we assume that for $1\le n \le N$, the linear projections $S_n$ and $S_{N+1}$ commute.

Let $\hH_0$ and $\I_0$ be the $\Q$-linear subspaces of $\F$ given by
\begin{equation*}\label{form157}
\hH_0 = \hH_1 + \hH_2 + \cdots + \hH_N,\quad\text{and}\quad \I_0 = \I_1 \cap \I_2 \cap \cdots \cap \I_N.
\end{equation*}
By the inductive hypothesis 
\begin{equation}\label{form159}
\F = \hH_0 \oplus \I_0,
\end{equation}
and $W_N : \F \rightarrow \hH_0$ is the unique, surjective, linear projection such that $\ker W_N = \I_0$.  For $1 \le n \le N$
the projections $S_n$ and $S_{N+1}$ commute, and therefore the projections $(I - S_n)$ and $S_{N+1}$ also commute.
It follows that
\begin{align*}\label{form161}
\begin{split}
S_{N+1}\circ W_N &= S_{N+1} - S_{N+1}\circ(I - S_1)\circ(I - S_2)\circ \cdots \circ (I - S_N)\\
                           &= S_{N+1} - (I - S_1)\circ(I - S_2)\circ \cdots \circ (I - S_N)\circ S_{N+1}\\
                           &= W_N\circ S_{N+1},
\end{split}
\end{align*}
and this shows that the linear projections $W_N$ and $S_{N+1}$ commute.  Hence we may apply 
Lemma \ref{lemform1} to the two direct sum decompositions (\ref{form155}) and (\ref{form159}).  We conclude that 
\begin{equation*}\label{from163}
\begin{split}
\F &= (\hH_0 + \hH_{N+1}) \oplus (\I_0 \cap \I_{N+1})\\
    &= (\hH_1 + \hH_2 + \cdots + \hH_N + \hH_{N+1}) \oplus (\I_1 \cap \I_2 \cap \cdots \cap \I_N \cap \I_{N+1}).
\end{split}
\end{equation*}
And the unique, surjective, linear projection 
\begin{equation*}\label{form165}
W_{N+1} : \F \rightarrow \hH_1 + \hH_2 + \cdots + \hH_N + \hH_{N+1}
\end{equation*}
such that
\begin{equation*}\label{form167}
\ker W_{N+1} = \I_1 \cap \I_2 \cap \cdots \cap \I_N \cap \I_{N+1},
\end{equation*}
is given by
\begin{align}\label{form169}
\begin{split}
W_{N+1} &= W_N + S_{N+1} - W_N \circ S_{N+1}\\
                &= I - (I - S_1)\circ(I - S_2)\circ \cdots \circ (I - S_N)\\
                &\qquad\qquad - (I - S_1)\circ(I - S_2)\circ \cdots \circ (I - S_N)\circ S_{N+1}\\
                 &= I - (I - S_1)\circ(I - S_2)\circ \cdots \circ (I - S_N)\circ(I - S_{N+1}).
\end{split}
\end{align}
If each surjective, linear projection $S_n,$ for $1 \le n \le N+1$, is continuous, then it is obvious from (\ref{form169}) that 
$W_{N+1}$ is continuous.  This establishes the theorem with $N$ replaced by $N+1$, and completes the proof of
(\ref{form145}) and (\ref{form151}) by induction.

Again the last statement of the theorem is obvious.
\end{proof}

\section{The image of a continuous linear projection is $\F$-closed}\label{dsx}

Again we suppose that $\hH \subseteq \F$ and $\I \subseteq \F$ are $\Q$-linear subspaces such that
\begin{equation*}\label{form26}
\F = \hH \oplus \I.
\end{equation*}
The closure of $\hH$ in the $L^1$-norm topology of $\X$ is a closed 
$\R$-linear subspace of $\X$, and similarly for the closure of $\I$ in $L^1$-norm.  We write
\begin{equation*}\label{form28}
\closure \hH \subseteq \X,\quad\text{and}\quad \closure \I \subseteq \X,
\end{equation*}
for the closure of $\hH$ and the closure of $\I$, respectively, in $\X$.

\begin{theorem}\label{thmsum1}  Let $\hH \subseteq \F$ and $\I \subseteq \F$ be $\Q$-linear subspaces such that
\begin{equation*}\label{form33}
\F = \hH \oplus \I,
\end{equation*}
and let $S:\F \rightarrow \hH$ be the surjective, linear, projection such that $\I = \ker S$.  Then the 
following conditions are equivalent:
\begin{itemize}
\item[(i)]  the linear projection $S$ is continuous,
\item[(ii)]  the $\R$-linear subspaces $\closure \hH \subseteq \X$, and $\closure \I \subseteq \X$, satisfy
\begin{equation}\label{form35}
\X = \closure \hH \oplus \closure \I.
\end{equation}
\end{itemize}
\end{theorem}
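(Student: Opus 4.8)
The plan is to prove the two implications separately, using the density of $\F$ in $\X$ and the isometry between the height norm on $\G$ and the $L^1$-norm on $\F$ throughout.

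First I would prove that (i) implies (ii). Assume $S : \F \rightarrow \hH$ is continuous. Since $\F$ is dense in $\X$ and $S$ is a bounded $\Q$-linear operator on $\F$, it extends uniquely to a bounded $\R$-linear operator $\widetilde{S} : \X \rightarrow \X$; because $S^2 = S$ on the dense subspace $\F$, continuity gives $\widetilde{S}^2 = \widetilde{S}$ on all of $\X$, so $\widetilde{S}$ is a continuous linear projection. Then $\X = \iim \widetilde{S} \oplus \ker \widetilde{S}$, and both summands are closed. I would check $\iim \widetilde{S} = \closure \hH$: the inclusion $\hH \subseteq \iim \widetilde{S}$ together with closedness gives $\closure \hH \subseteq \iim \widetilde{S}$, and conversely any $F = \widetilde{S}F$ is a limit of $\widetilde{S}f_{\alpha_k} = S f_{\alpha_k} \in \hH$, so $\iim \widetilde{S} \subseteq \closure \hH$. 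Similarly $\ker \widetilde{S} = \closure \I$: if $F = \lim f_{\alpha_k}$ with $f_{\alpha_k} \in \I = \ker S$ then $\widetilde{S}F = \lim S f_{\alpha_k} = 0$, giving $\closure \I \subseteq \ker \widetilde{S}$; and if $\widetilde{S}F = 0$, writing $F = \lim f_{\alpha_k}$ we get $F = \lim (f_{\alpha_k} - S f_{\alpha_k})$, a limit of elements of $\I$, so $F \in \closure \I$. Hence (\ref{form35}) holds.

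Next I would prove that (ii) implies (i). Assume $\X = \closure \hH \oplus \closure \I$ as a topological direct sum; since $\X$ is a Banach space and both summands are closed, the associated projection $P : \X \rightarrow \closure \hH$ along $\closure \I$ is automatically bounded (this is the standard closed-graph consequence of a Banach-space direct sum decomposition into closed subspaces). I claim $P$ restricts to $S$ on $\F$. Given $f_{\alpha} \in \F$, write $f_{\alpha} = f_{\beta} + f_{\gamma}$ with $f_{\beta} \in \hH$ and $f_{\gamma} \in \I$; then $f_{\beta} \in \closure \hH$ and $f_{\gamma} \in \closure \I$, so by uniqueness of the decomposition in $\X$ we get $P f_{\alpha} = f_{\beta} = S f_{\alpha}$. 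Therefore $S$ is the restriction of the bounded operator $P$ to $\F$, hence continuous with respect to the $L^1$-norm on $\F$, which is exactly condition (i).

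The main obstacle is the one genuinely analytic input: the fact that in a Banach space a direct sum decomposition into two closed subspaces yields a bounded projection. I would either cite it as a standard corollary of the closed graph theorem (the graph of $P$ is closed because if $x_k \to x$ and $Px_k \to z$ then $z \in \closure\hH$, $x - z = \lim(x_k - Px_k) \in \closure\I$, forcing $z = Px$), or note that it follows from the open mapping theorem applied to the continuous bijection $\closure\hH \times \closure\I \to \X$. Everything else is a routine back-and-forth between the dense subspace $\F$ and its completion $\X$, using the isometry (\ref{ht15}) to translate between the height topology on $\G$ and the $L^1$-topology.
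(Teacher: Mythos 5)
Your proposal is correct and follows essentially the same route as the paper: extending $S$ by density to a continuous projection $\widetilde{S}$ on $\X$ and identifying its image and kernel as $\closure \hH$ and $\closure \I$ for one direction, and invoking the boundedness of the projection associated to a direct sum of closed subspaces of a Banach space (which the paper cites from Megginson, and which you justify via the closed graph theorem) and restricting it to $\F$ for the other. No gaps.
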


\begin{proof}  Assume that $S$ is continuous.  As $\F \subseteq \X$ is dense, the map $S$ has a unique extension to a continuous 
linear map $\wS : \X \rightarrow \closure \hH$.  Let $F$ belong to $\X$, so that $\wS(F)$ belongs to $\closure \hH$, and let 
\begin{equation*}\label{form37}
\big\{f_{\beta_n} : n = 1, 2, \dots \big\}
\end{equation*}
be a sequence in $\hH$ that converges in $L^1$-norm to $\wS(F)$.  Because $\wS$ is continuous and extends $S$, we get
\begin{align*}\label{form39}
\begin{split}
\wS^2(F) &= \wS\bigl(\lim_{n\rightarrow \infty} f_{\beta_n}\bigr) = \lim_{n\rightarrow \infty}\wS\bigl(f_{\beta_n}\bigr)\\ 
	        &= \lim_{n\rightarrow \infty} S\bigl(f_{\beta_n}\bigr) = \lim_{n\rightarrow \infty} f_{\beta_n}\\
	        &= \wS(F).
\end{split}
\end{align*}
It follows that $\wS : \X \rightarrow \closure \hH$ is a continuous, linear projection.  As the image of a continuous, linear
projection is a closed subspace, we find that the image of $\wS$ is a closed subspace containing $\hH$ and contained
in $\closure \hH$.  We conclude that $\wS : \X \rightarrow \closure \hH$ is surjective, and therefore we get
\begin{equation}\label{form41}
\X = \closure \hH \oplus \ker \wS.
\end{equation}

Since $\wS$ is continuous,
\begin{equation*}\label{form43}
\ker \wS = \{F \in \X : \wS(F) = 0\}
\end{equation*}
is obviously a closed linear subspace that contains $\ker S = \I$.  Thus we have
\begin{equation}\label{form45}
\closure \I \subseteq \ker \wS.
\end{equation} 
Suppose that $F$ belongs to $\ker \wS$, and let
\begin{equation*}\label{form47}
\big\{f_{\alpha_n} : n = 1, 2, \dots \big\}
\end{equation*} 
be a sequence in $\F$ that converges in $L^1$-norm to $F$.  Write
\begin{equation*}\label{form49}
f_{\alpha_n} = f_{\beta_n} + f_{\gamma_n},\quad\text{where}\ f_{\beta_n} \in \hH,\ \text{and}\ f_{\gamma_n} \in \I,
\end{equation*}
for each $n = 1, 2, \dots  $.  Then we have
\begin{equation*}\label{form51}
0 = \wS(F) = \lim_{n\rightarrow \infty} \wS\bigl(f_{\alpha_n}\bigr)
    = \lim_{n\rightarrow \infty} S\bigl(f_{\alpha_n}\bigr) = \lim_{n\rightarrow \infty} f_{\beta_n},
\end{equation*}
and it follows that
\begin{equation*}\label{form53}
\lim_{n\rightarrow} f_{\gamma_n} = F.
\end{equation*}
That is, $F$ belongs to $\closure \I$, and therefore
\begin{equation}\label{form55}
\ker \wS \subseteq \closure \I.
\end{equation}
Now (\ref{form41}), (\ref{form45}), and (\ref{form55}), establish the identity (\ref{form35}).  We have proved that (i) implies (ii).

Next we assume that (\ref{form35}) holds.  Then it follows from the general theory of complemented subspaces in a Banach
space (see \cite[Theorem 3.2.14]{megginson}), that there exists a continuous, surjective, linear projection 
\begin{equation*}\label{form57}
U : \X \rightarrow \closure \hH
\end{equation*}
such that $\ker U = \closure \I$.  Moreover, if $F$ belongs to $\X$, if the unique decomposition 
of $F$ is given by
\begin{equation*}\label{form59}
F = F_1 + F_2,\quad\text{where}\ F_1 \in \closure \hH,\ \text{and}\ F_2 \in \closure \I,
\end{equation*} 
then
\begin{equation*}\label{form61}
U(F) = U\bigl(F_1 + F_2\bigr) = F_1.
\end{equation*}
In particular, if $f_{\alpha}$ belongs to the dense $\Q$-linear space $\F$, and
\begin{equation*}\label{form63}
f_{\alpha_n} = f_{\beta_n} + f_{\gamma_n},\quad\text{where}\ f_{\beta_n} \in \hH,\ \text{and}\ f_{\gamma_n} \in \I,
\end{equation*}
then it follows that
\begin{equation*}\label{form65}
U\bigl(f_{\alpha}\bigr) = U\bigl(f_{\beta} + f_{\gamma}\bigr) = f_{\beta}.
\end{equation*}
This shows that the restriction of the continuous, linear projection $U$ to $\F$ is equal to $S$.  Hence $S$ is continuous,
and it follows that (ii) implies (i).
\end{proof}

\begin{theorem}\label{thmsum2}  Let $\hH \subseteq \F$ and $\I \subseteq \F$ be $\Q$-linear subspaces such that
\begin{equation*}\label{form67}
\F = \hH \oplus \I,
\end{equation*}
and let $S:\F \rightarrow \hH$ be the surjective, linear projection such that $\I = \ker S$.  If $S$ is continuous, then 
the $\Q$-linear subspaces $\hH \subseteq \F$ and $\I \subseteq \F$ are both $\F$-closed.
\end{theorem}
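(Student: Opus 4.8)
The plan is to deduce this statement directly from Theorem \ref{thmsum1}. Assume $S$ is continuous. By Theorem \ref{thmsum1} we obtain the direct sum decomposition $\X = \closure \hH \oplus \closure \I$ of the Banach space $\X$. In particular, $\closure \hH$ and $\closure \I$ are \emph{topologically} complemented closed subspaces of $\X$, so each of them intersects the dense subspace $\F$ in a set we can describe cleanly.

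\begin{proof}  Assume that $S$ is continuous.  We verify that $\hH$ is $\F$-closed; the argument for $\I$ is identical after interchanging the roles of $\hH$ and $\I$, which is legitimate since $T = I - S : \F \rightarrow \I$ is also a continuous, surjective, linear projection with $\ker T = \hH$.  By definition we must show that $\hH = \F \cap \closure \hH$.  The inclusion $\hH \subseteq \F \cap \closure \hH$ is immediate.  For the reverse inclusion, let $f_{\alpha}$ belong to $\F \cap \closure \hH$.  Write the unique decomposition
\begin{equation*}
f_{\alpha} = f_{\beta} + f_{\gamma},\quad\text{where}\ f_{\beta} \in \hH,\ \text{and}\ f_{\gamma} \in \I,
\end{equation*}
so that $f_{\gamma} = f_{\alpha} - f_{\beta}$.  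Since $f_{\alpha} \in \closure \hH$ and $f_{\beta} \in \hH \subseteq \closure \hH$, and $\closure \hH$ is an $\R$-linear subspace of $\X$, we get $f_{\gamma} \in \closure \hH$.  But $f_{\gamma} \in \I \subseteq \closure \I$, so $f_{\gamma}$ belongs to $\closure \hH \cap \closure \I$.  By Theorem \ref{thmsum1}, the hypothesis that $S$ is continuous gives $\X = \closure \hH \oplus \closure \I$, whence $\closure \hH \cap \closure \I = \{0\}$.  Therefore $f_{\gamma} = 0$, and $f_{\alpha} = f_{\beta}$ belongs to $\hH$.  This proves $\F \cap \closure \hH \subseteq \hH$, and hence $\hH$ is $\F$-closed.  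As noted, the same argument applied to the continuous projection $T = I - S$ shows that $\I$ is $\F$-closed.
\end{proof}

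The only real content is the appeal to Theorem \ref{thmsum1} for the equivalence between continuity of $S$ and the topological direct sum decomposition of $\X$; everything else is a routine intersection argument using that a linear subspace is closed under subtraction.  The one point worth being careful about is that one cannot simply say ``$\closure \hH \cap \F = \hH$'' without using the complement $\closure \I$: a priori the closure of $\hH$ in $\X$ could meet $\F$ in something strictly larger than $\hH$, and it is precisely the direct sum decomposition $\X = \closure \hH \oplus \closure \I$ (equivalently, the continuity of $S$) that rules this out.  Thus there is no genuine obstacle here; the lemma is essentially a corollary of the preceding theorem, recorded separately for convenient reference when the commuting-projections machinery of Section \ref{dsf} is combined with the field-theoretic projections $S_{K_n}$.
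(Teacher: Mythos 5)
Your proof is correct, but it takes a different route from the paper's. The paper does not invoke Theorem \ref{thmsum1} at all: given $f_{\alpha} \in \F \cap \closure \hH$ and a sequence $f_{\beta_n} \in \hH$ converging to $f_{\alpha}$, it uses only that $S$ is bounded and fixes $\hH$ to estimate $\big\|f_{\alpha} - S\bigl(f_{\alpha}\bigr)\big\|_1 \le \bigl(1 + \|S\|\bigr)\big\|f_{\beta_n} - f_{\alpha}\big\|_1 \rightarrow 0$, concluding $f_{\alpha} = S\bigl(f_{\alpha}\bigr) \in \hH$; the claim for $\I$ is handled the same way with $T = I - S$. You instead derive the result as a corollary of Theorem \ref{thmsum1}: continuity of $S$ gives $\X = \closure \hH \oplus \closure \I$, the component $f_{\gamma}$ of $f_{\alpha}$ lies in $\closure \hH \cap \closure \I = \{0\}$, and hence $f_{\alpha} = f_{\beta} \in \hH$. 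Both arguments are sound. Yours is shorter and makes the mechanism transparent (the closures are complementary, so they can only meet $\F$ in the original summands), at the cost of relying on the harder implication of Theorem \ref{thmsum1}, whose proof requires extending $S$ to $\wS$ on all of $\X$ and identifying its image and kernel. The paper's argument is more elementary and self-contained, needing nothing beyond the operator-norm bound for $S$ on $\F$. Your closing remark --- that one cannot conclude $\F \cap \closure \hH = \hH$ without some input from continuity --- is exactly the right caution, and the paper's direct estimate is simply a different way of supplying that input.
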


\begin{proof}   Let $f_{\alpha}$ be a function in $\F \cap \closure \hH$, and let 
\begin{equation*}\label{form75}
\big\{f_{\beta_n} : n = 1, 2, \dots \big\}
\end{equation*}  
be a sequence of functions in $\hH$ such that
\begin{equation*}\label{from77}
\lim_{N \rightarrow \infty} f_{\beta_n} = f_{\alpha}
\end{equation*}
in $L^1$-norm.  Because $S$ is continuous and the restriction of $S$ to $\hH$ is the identity, we get  
\begin{align}\label{form79}
\begin{split}
\big\|f_{\alpha} - S\bigl(f_{\alpha}\bigr)\big\|_1 &\le \big\|f_{\alpha} - f_{\beta_n}\big\|_1 + \big\|f_{\beta_n} - S\bigl(f_{\alpha}\bigr)\big\|_1\\
	&= \big\|f_{\alpha} - f_{\beta_n}\big\|_1 + \big\|S\bigl(f_{\beta_n} - f_{\alpha}\bigr)\big\|_1\\
	&\le \bigl(1 + \|S\|\bigr) \big\|f_{\beta_n} - f_{\alpha}\big\|_1
\end{split}
\end{align}
for each positive integer $n$.  Letting $n \rightarrow \infty$ on the right of (\ref{form79}), we conclude that
\begin{equation*}\label{form81}
f_{\alpha}  = S\bigl(f_{\alpha}\bigr).
\end{equation*}
As $S$ maps $\F$ onto $\hH$, we find that $f_{\alpha}$ belongs to the subspace $\hH$.  We have shown that
\begin{equation*}\label{form83}
\F \cap \closure \hH \subseteq \hH,
\end{equation*}
and the reverse containment is obvious.   This verifies that $\hH$ is $\F$-closed.

Let $T \rightarrow \I$ be defined by
\begin{equation*}\label{form85}
T\bigl(f_{\alpha}\bigr) = f_{\alpha} - S\bigl(f_{\alpha}\bigr).
\end{equation*}
Then $T$ is a continuous, linear projection onto $\I$, and the identity
\begin{equation*}\label{form90}
\I = \F \cap \closure \I
\end{equation*}
can be established by a similar argument using $T$.  Thus $\I$ is $\F$-closed.
\end{proof}

\section{Projections onto subspaces generated by fields}\label{comp} 

A basic problem for an infinite dimensional Banach space is to identify those closed linear subspaces that are
complemented.  We recall (see \cite{megginson} for further details) that a linear operator $U : \X \rightarrow \X$ is 
a {\it projection} if $U^2 = U$.  Then the image of a continuous, linear projection is a closed linear subspace of $\X$.  We say 
that a closed, linear subspace $\U \subseteq \X$ is {\it complemented} in $\X$ if and only if $\U$ is the image of a 
continuous, linear projection $U : \X \rightarrow \U$.  Alternatively, $\U$ is complemented in $\X$ if and only if there exists 
a second closed linear subspace $\V \subseteq \X$ such that $\X$ has the direct sum decomposition
\begin{equation*}\label{form96}
\X = \U \oplus \V.
\end{equation*}
If $\U$ is the image of the continuous, linear projection $U$, then we can take $\V$ to be the kernel of $U$.  
In this section we will show that if $K/\Q$ is an algebraic extension of fields, then the closed, $\R$-linear subspace 
\begin{equation*}\label{form98}
\X_K = \closure \D_K 
\end{equation*}
is complemented in $\X$.  

Let $u$ be a place of $\Q$.  As discussed in \cite[section 3]{all2009}, the profinite group $\Aut(\oQ/\Q)$ acts transitively 
and continuously on each compact, open subset
\begin{equation*}\label{form100}
Y(\Q, u) = \big\{y \in Y :  y|u\big\}.
\end{equation*}
In particular, it follows from \cite[Lemma 3]{all2009} that the map
\begin{equation*}\label{form102}
(\tau, y) \mapsto \tau y
\end{equation*}
from $\Aut(\oQ/\Q) \times Y(\Q, u)$ onto $Y(\Q, u)$ is continuous.   As $Y$ is the disjoint union of the compact, open sets
\begin{equation*}\label{form105}
Y = \bigcup_u Y(\Q, u),
\end{equation*} 
we conclude that $(\tau, y) \mapsto \tau y$ is a continuous map from $\Aut(\oQ/\Q) \times Y$ onto $Y$.  Then it is obvious that
\begin{equation}\label{form110}
(\tau, y) \mapsto \tau^{-1} y
\end{equation}
is also a continuous map from $\Aut(\oQ/\Q) \times Y$ onto $Y$.  

Suppose that $F$ belongs to $L^1(Y, \B, \lambda)$.  Because (\ref{form110}) is continuous, we find that for each 
$\tau$ in $\Aut(\oQ/\Q)$, the map
\begin{equation}\label{form116}
y \mapsto F(\tau^{-1} y)
\end{equation}
is a Borel measurable function from $Y$ into $\R$.  Then it follows from \cite[Theorem 4]{all2009} that for each 
$\tau$ in $\Aut(\oQ/\Q)$ the function (\ref{form116}) belongs to $L^1(Y, \B, \lambda)$, and
\begin{equation}\label{form120}
\int_Y |F(\tau^{-1} y)\bigr|\ \dla(y) = \int_Y |F(y)|\ \dla(y) = \|F\|_1.
\end{equation}
We use these observations to define a group of continuous, linear isometries.

For each automorphism $\tau$ in $\Aut(\oQ/\Q)$, we define a map
\begin{equation*}\label{form310}
\Phi_{\tau}: L^1(Y, \B, \lambda) \rightarrow L^1(Y, \B, \lambda)
\end{equation*}
by
\begin{equation}\label{form315}
\Phi_{\tau}(F)(y) = F(\tau^{-1} y).
\end{equation}
It is obvious that $\Phi_{\tau}$ is a linear map, and (\ref{form120}) shows that $\Phi_{\tau}(F)$ belongs to $L^1(Y, \B, \lambda)$.
Moreover, it follows from (\ref{form120}) that $\Phi_{\tau}$ is a linear isometry, and therefore $\Phi_{\tau}$ is continuous.  
(See \cite{lamperti1958} for a general representation of isometries on $L^p$-spaces.)  Applying 
\cite[Theorem 4]{all2009} again, we find that
\begin{equation}\label{form325}
\int_Y \Phi_{\tau}(F)(y)\ \dla(y) = \int_Y F(y)\ \dla(y).
\end{equation}
If $F$ belongs to the closed subspace $\X$, then it follows from (\ref{form325}) that $\Phi_{\tau}(F)$
belongs to $\X$.  Therefore $\Phi_{\tau}$ restricted to $\X$ is a linear isometry mapping $\X$ onto $\X$.  It will be 
convenient for our purposes to restrict the domain of each operator $\Phi_{\tau}$ to the subspace $\X$.
Thus for each $\tau$ in $\Aut(\oQ/\Q)$, we understand the map $\Phi_{\tau}$ to be a linear isometry
\begin{equation*}\label{form330}
\Phi_{\tau} : \X \rightarrow \X.
\end{equation*}

For a function $F$ in $\X$, and automorphisms $\sigma$ and $\tau$ in $\Aut(\oQ/\Q)$, we have
\begin{equation*}\label{form335}
\Phi_{\sigma}\bigl(\Phi_{\tau}(F)\bigr)(y) = \Phi_{\sigma\tau}(F)(y).
\end{equation*}
It follows that $\tau \mapsto \Phi_{\tau}$ is a homomorphism from the group $\Aut(\oQ/\Q)$ into the group 
$\Iso(\X)$ of {\it all} linear isometries of $\X$ onto itself.  The image of this homomorphism is obviously the subgroup
\begin{equation}\label{form340}
\big\{\Phi_{\tau} : \tau \in \Aut(\oQ/\Q)\big\} \subseteq \Iso(\X).
\end{equation}
If $f_{\alpha}(y) = \log \|\alpha\|_y$
belongs to the $\Q$-vector space $\F \subseteq \X$, then it follows from (\ref{form315}) that
\begin{equation}\label{form345}
\Phi_{\tau}\bigl(f_{\alpha}\bigr)(y) = f_{\alpha}\bigl(\tau^{-1}y\bigr) = \log \|\tau \alpha\|_y = f_{\tau \alpha}(y).
\end{equation}
Hence the maps (\ref{form340}), with domains restricted to the $\Q$-vector space $\F \subseteq \X$, act as a 
group of continuous, linear, isometries of $\F$ onto itself.  

Let $K/\Q$ is an algebraic extension of fields.  We consider the subgroup
\begin{equation}\label{form350}
\big\{\Phi_{\sigma} : \sigma \in \Aut(\oQ/K)\big\} \subseteq \big\{\Phi_{\tau} : \tau \in \Aut(\oQ/\Q)\big\}.
\end{equation}
Lemma \ref{lemnew2} and (\ref{form345}) imply that the orbit of each function $f_{\alpha}$ in $\F$ under the
action of the subgroup on the left of (\ref{form350}), contains
\begin{equation*}\label{form347}
\bigl|\Orb_K(\alpha)\bigr| = \delta_K(\alpha) = \delta_K\bigl(f_{\alpha}\bigr)
\end{equation*}
distinct functions.  In the following result we identify the set of fixed points when this subgroup acts on $\F$, and on $\X$.
We recall that $\D_K$, as defined in (\ref{ht35}), is the $\Q$-linear subspace of $\F$ generated by $\F_K$. 

\begin{lemma}\label{lemcomp1}  Let $K/\Q$ be an algebraic extension of fields.  Then 
\begin{equation}\label{form355}
\D_K = \big\{f_{\alpha} \in \F : \text{$\Phi_{\tau}\bigl(f_{\alpha}\bigr) = f_{\alpha}$ for each $\tau$ in $\Aut(\oQ/K)$}\big\},
\end{equation}
and
\begin{equation}\label{form360}
\closure \D_K = \big\{F \in \X : \text{$\Phi_{\tau}(F) = F$ for each $\tau$ in $\Aut(\oQ/K)$}\big\}.
\end{equation}
\end{lemma}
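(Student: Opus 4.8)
The plan is to read off both identities from the orbit computations of Section~\ref{pl}, the formula $\Phi_{\tau}(f_{\alpha}) = f_{\tau\alpha}$ recorded in (\ref{form345}), and the fact that every $\Phi_{\tau}$ restricts to a linear isometry of $\X$, hence is continuous. First I would prove (\ref{form355}). Since $\alpha \mapsto f_{\alpha}$ is an injective linear map of $\G$ into $\F$ that carries $\spann_{\Q}\G_K$ onto $\D_K = \spann_{\Q}\F_K$, a function $f_{\alpha}$ lies in $\D_K$ exactly when the coset of $\alpha$ lies in $\spann_{\Q}\G_K$; and by (\ref{form345}) together with injectivity, $\Phi_{\tau}(f_{\alpha}) = f_{\alpha}$ holds exactly when $\tau\alpha = \alpha$ in $\G$. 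Now (\ref{new123}) identifies $\spann_{\Q}\G_K$ with the set of elements of $\G$ fixed by every $\tau$ in $\Aut(\oQ/K)$, and chaining these equivalences yields (\ref{form355}).

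For (\ref{form360}) the inclusion of $\closure\D_K$ in the fixed-point set is immediate: if $f_{\alpha_n}\to F$ in $L^1$-norm with each $f_{\alpha_n}\in\D_K$, then by (\ref{form355}) each $f_{\alpha_n}$ is fixed by $\Phi_{\tau}$, so continuity of $\Phi_{\tau}$ gives $\Phi_{\tau}(F) = \lim_n \Phi_{\tau}(f_{\alpha_n}) = \lim_n f_{\alpha_n} = F$ for every $\tau\in\Aut(\oQ/K)$.

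For the reverse inclusion, suppose $F\in\X$ satisfies $\Phi_{\tau}(F) = F$ for all $\tau\in\Aut(\oQ/K)$, and choose $f_{\alpha_n}\in\F$ with $f_{\alpha_n}\to F$, using that $\F$ is dense in $\X$. Fix $n$, write $\Orb_K(\alpha_n) = \{\beta_{n,1},\dots,\beta_{n,L_n}\}$ with $L_n = |\Orb_K(\alpha_n)|$ (finite by (\ref{new70})), and set $\gamma_n = \beta_{n,1}\cdots\beta_{n,L_n}$. By the final paragraph of Section~\ref{pl}, $\gamma_n$ lies in $\spann_{\Q}\G_K$, so
\[
g_n := L_n^{-1} f_{\gamma_n} = L_n^{-1}\sum_{j=1}^{L_n} f_{\beta_{n,j}}
\]
belongs to the $\Q$-vector space $\D_K$. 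Choosing coset representatives $\sigma_{n,1},\dots,\sigma_{n,L_n}$ for $\Aut(\oQ/K)$ modulo the stabilizer of the coset of $\alpha_n$ and indexing the orbit so that $\beta_{n,j}$ is the coset of $\sigma_{n,j}\alpha_n$, we have $f_{\beta_{n,j}} = \Phi_{\sigma_{n,j}}(f_{\alpha_n})$, hence $g_n = L_n^{-1}\sum_j \Phi_{\sigma_{n,j}}(f_{\alpha_n})$, while $F = L_n^{-1}\sum_j \Phi_{\sigma_{n,j}}(F)$ because each $\sigma_{n,j}$ fixes $F$. Since each $\Phi_{\sigma_{n,j}}$ is an $L^1$-isometry, the triangle inequality gives
\[
\bigl\|g_n - F\bigr\|_1 \le L_n^{-1}\sum_{j=1}^{L_n}\bigl\|\Phi_{\sigma_{n,j}}(f_{\alpha_n} - F)\bigr\|_1 = \bigl\|f_{\alpha_n} - F\bigr\|_1 \longrightarrow 0,
\]
so $F = \lim_n g_n \in \closure\D_K$, completing the proof of (\ref{form360}).

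The substantive point — and the only place where care is needed — is that the orbit average $g_n$ must be recognized in two ways at once: as $L_n^{-1}$ times the ``norm'' $f_{\gamma_n}$, which places it in $\D_K$ via the product-of-conjugates argument at the end of Section~\ref{pl} (this uses Lemma~\ref{lemnew2} and the finiteness of $\Orb_K(\alpha_n)$), and as a convex average of isometric translates of $f_{\alpha_n}$, which makes the displayed norm estimate collapse to $\|f_{\alpha_n}-F\|_1$. Everything else is formal bookkeeping with the isometries $\Phi_{\tau}$.
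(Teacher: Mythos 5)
Your proof is correct and follows essentially the same route as the paper: both establish (\ref{form355}) from (\ref{new123}) and (\ref{form345}), and both prove the nontrivial inclusion of (\ref{form360}) by approximating $F$ with some $f_{\alpha}$ and replacing $f_{\alpha}$ by the orbit average $L^{-1}f_{\gamma}\in\D_K$, bounding $\|F-L^{-1}f_{\gamma}\|_1$ via the $\Phi_{\tau}$-invariance of $F$ and the isometry property. The only difference is cosmetic (a sequence formulation versus an $\ep$-argument).
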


\begin{proof}  The identity (\ref{new123}) asserts that
\begin{equation}\label{form363}
\spann_{\Q} \G_K = \{\alpha \in \G : \text{$\tau \alpha = \alpha$ for each $\tau$ in $\Aut(\oQ/K)$}\}.
\end{equation}
The image of $\spann_{\Q} \G_K$ in $\F$ is the $\Q$-linear subspace $\D_K$.  Using (\ref{form345}), the image in $\F$ of the 
set on the right of (\ref{form363}) is the set on the right of (\ref{form355}).  Hence (\ref{form355}) follows from (\ref{form363}).

As each linear isometry $\Phi_{\tau}$ is continuous, the subset on the right of (\ref{form360}) is closed, and by
what we have just proved it contains $\D_K$.  Therefore we have
\begin{equation}\label{form410}
\closure \D_K \subseteq \big\{F \in \X : \text{$\Phi_{\tau}(F) = F$ for each $\tau$ in $\Aut(\oQ/K)$}\big\}.
\end{equation}

Assume that $F$ belongs to $\X$, and $F$ satisfies
\begin{equation}\label{form415}
 \Phi_{\tau}(F) = F\quad\text{for each $\tau$ in $\Aut(\oQ/K)$}.
\end{equation}
As $\F$ is dense in $\X$, for each $\ep > 0$ there exists $f_{\beta}$ in $\F$ such that
\begin{equation*}\label{form420}
\|F - f_{\beta}\|_1 < \ep.
\end{equation*}
Using (\ref{form415}) and the fact that $\Phi_{\tau}$ is an isometry, we find that
\begin{equation}\label{form425}
\big\|F - \Phi_{\tau}\bigl(f_{\beta}\bigr)\big\|_1 
	= \big\|\Phi_{\tau}\bigl(F - f_{\beta}\bigr)\big\|_1 = \big\|F - f_{\beta}\big\|_1 < \ep
\end{equation}
for each automorphism $\tau$ in $\Aut(\oQ/K)$.  Alternatively, (\ref{form345}) and (\ref{form425}) imply that
\begin{equation}\label{form430}
\|F - f_{\tau \beta}\|_1 < \ep,\quad\text{for each $\tau$ in $\Aut(\oQ/K)$}.
\end{equation}
Let $\delta_K(\beta) = L$, and let
\begin{equation*}\label{form434}
\Orb_K(\beta) = \big\{\beta_1, \beta_2, \dots , \beta_L\big\}
\end{equation*}
be the distinct elements in the orbit of $\beta$, as in (\ref{new125}).  Write
\begin{equation*}\label{from438}
\gamma = \beta_1 \beta_2 \cdots \beta_L,
\end{equation*}
so that $\gamma$ belongs to $\spann_{\Q} \G_K$.  Then
\begin{equation*}\label{form442}
L^{-1} f_{\gamma}(y) = L^{-1} \sum_{\ell = 1}^L f_{\beta_{\ell}}(y)
\end{equation*}
belongs to $\D_K$.  Using (\ref{form430}) we get
\begin{equation}\label{form450}
\begin{split}
\|F - L^{-1} f_{\gamma}\|_1 &= \bigg\|L^{-1} \sum_{\ell = 1}^L \bigl(F - f_{\beta_{\ell}}\bigr)\bigg\|_1\\
	&\le L^{-1} \sum_{\ell = 1}^L \big\|F - f_{\beta_{\ell}}\big\|_1\\
	&< \ep.
\end{split}
\end{equation}
As $\ep > 0$ was arbitrary, the inequality (\ref{form450}) implies that $F$ is a limit point of $\D_K$, and 
therefore $F$ belongs to $\closure \D_K$.  We have shown that
\begin{equation}\label{form455}
\big\{F \in \X : \text{$\Phi_{\tau}(F) = F$ for each $\tau$ in $\Aut(\oQ/K)$}\big\} \subseteq \closure \D_K.
\end{equation}
Now (\ref{form360}) follows from (\ref{form410}) and (\ref{form455}).
\end{proof}

For each algebraic field extension $K/\Q$, we use the collection of isometries on the left of (\ref{form350}) to define 
a surjective, continuous, linear projection $U_K : \X \rightarrow \X_K$.  For finite extensions of $\Q$ this was done 
by Fili and Miner in \cite[section 2.3]{FM2013}.  Here we assume only that $K/\Q$ is an algebraic extension.  

The subgroup $\Aut(\oQ/K)$ is a compact, topological group.  Let $\nu_K$ denote a Haar measure 
on the Borel subsets of $\Aut(\oQ/K)$, normalized so that 
\begin{equation*}\label{form460}
\nu_K\bigl(\Aut(\oQ/K)\bigr) = 1.
\end{equation*}
For each $F$ in $\X$, Fubini's theorem and (\ref{form120}) imply that
\begin{align}\label{ch1}
\begin{split}
\int_Y \int_{\Aut(\oQ/K)} &\bigl|\Phi_{\tau}(F)(y)\bigr|\ \dnu_K(\tau)\ \dla(y)\\
	&= \int_{\Aut(\oQ/K)} \int_Y \bigl|\Phi_{\tau}(F)(y)\bigr|\ \dla(y)\ \dnu_K(\tau) = \|F\|_1.
\end{split}
\end{align}
It follows that for $\lambda$-almost all points $y$ in $Y$ the map
\begin{equation*}\label{ch2}
\tau \mapsto \Phi_{\tau}(F)(y)
\end{equation*}
is $\nu_K$-integrable.  If $F$ belongs to $\X$, we define $U_K(F) : Y \rightarrow \R$ at $\lambda$-almost all points $y$
in $Y$ by
\begin{equation}\label{ch3}
U_K(F)(y) = \int_{\Aut(\oQ/K)} \Phi_{\tau}(F)(y)\ \dnu_K(\tau) = \int_{\Aut(\oQ/K)} F(\tau^{-1} y)\ \dnu_K(\tau).
\end{equation}
By our previous remarks, $y \mapsto U_K(F)(y)$ {\it is} finite $\lambda$-almost everywhere on $Y$.  Using (\ref{ch1}) we find that
\begin{equation}\label{ch4}
\int_Y\ |U_K(F)(y)|\ \dla(y) \le \int_Y \int_{\Aut(\oQ/K)} \bigl|\Phi_{\tau}(F)(y)\bigr|\ \dnu_K(\tau) \dla(y) = \|F\|_1,
\end{equation}
and therefore $y \mapsto U_K(F)(y)$ determines an element of $L^1(Y, \B, \lambda)$.  Because $F$ belongs to
$\X$, (\ref{form325}), (\ref{ch3}), and Fubini's theorem, imply that
\begin{equation}\label{ch5}
\int_Y U_K(F)(y)\ \dla(y) = \int_{\Aut(\oQ/K)} \int_Y \Phi_{\tau}(F)(y)\ \dla(y)\ \dnu_K(\tau) = 0.
\end{equation}
It follows from (\ref{ch5}) that $U_K(F)(y)$ belongs to $\X$, and (\ref{ch4}) shows that
\begin{equation*}\label{ch7}
U_K : \X \rightarrow \X
\end{equation*}
is a continuous, linear operator.  Next we show that $U_K$ is a projection.

\begin{theorem}\label{thmcomp1}  Let $K/\Q$ be an algebraic extension of fields, and let $U_K : \X \rightarrow \X$ be the 
continuous, linear operator defined by {\rm (\ref{ch3})}.  Then $U_K$ is a continuous, linear projection onto the
closed, $\R$-linear subspace $\X_K = \closure \D_K$, and therefore
\begin{equation}\label{form470}
\X = \X_K \oplus \Y_K,
\end{equation}
where $\Y_K = \ker U_K$.
\end{theorem}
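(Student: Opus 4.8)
The plan is to reduce the statement to two facts about the Haar-averaging operator $U_K$ defined in (\ref{ch3}): that it is idempotent, $U_K \circ U_K = U_K$, and that its image is precisely $\X_K = \closure \D_K$. Its continuity as a map $\X \to \X$ has already been verified via (\ref{ch4}) and (\ref{ch5}) (these also show $U_K(F)$ genuinely lands in $\X$), so once these two facts are in hand the decomposition (\ref{form470}) follows from the elementary observation that a continuous linear projection on a Banach space splits the space as the direct sum of its (closed) image and its (closed) kernel; we take $\Y_K = \ker U_K$. Thus the whole content is the identification of $\iim U_K$ together with idempotency.

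The first and main step is the invariance identity: for every $\sigma \in \Aut(\oQ/K)$ (note $\Aut(\oQ/K) \subseteq \Aut(\oQ/\Q)$, so $\Phi_\sigma$ is defined) and every $F \in \X$,
\[
\Phi_{\sigma}\bigl(U_K(F)\bigr) = U_K(F) \quad \text{in } \X.
\]
Unwinding (\ref{form315}) and (\ref{ch3}), for $\lambda$-almost every $y$ one has $\Phi_\sigma(U_K F)(y) = U_K(F)(\sigma^{-1}y) = \int_{\Aut(\oQ/K)} F\bigl((\sigma\tau)^{-1}y\bigr)\,\dnu_K(\tau)$, and since $\nu_K$ is a Haar measure on the compact group $\Aut(\oQ/K)$, the left translation $\tau \mapsto \sigma\tau$ preserves it, so the integral equals $\int_{\Aut(\oQ/K)} F(\tau^{-1}y)\,\dnu_K(\tau) = U_K(F)(y)$. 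The point requiring care is the measure-theoretic bookkeeping: one must know that the exceptional $\lambda$-null set on which $\tau \mapsto \Phi_\tau(F)(y)$ fails to be $\nu_K$-integrable can be chosen independently of $\sigma$ and of the change of variables, which is exactly what the Fubini estimate (\ref{ch1}) provides. Granting the identity, the fixed-point characterization (\ref{form360}) of Lemma \ref{lemcomp1} immediately gives $U_K(F) \in \closure \D_K = \X_K$ for every $F \in \X$, i.e. $\iim U_K \subseteq \X_K$.

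Conversely, if $F \in \X_K$, then by (\ref{form360}) we have $\Phi_\tau(F) = F$ for every $\tau \in \Aut(\oQ/K)$, so (\ref{ch3}) together with $\nu_K(\Aut(\oQ/K)) = 1$ gives $U_K(F) = \int_{\Aut(\oQ/K)} \Phi_\tau(F)\,\dnu_K(\tau) = F$. Hence $U_K$ restricts to the identity on $\X_K$; in particular $\X_K \subseteq \iim U_K$, so $\iim U_K = \X_K$, and since $U_K(F) \in \X_K$ for every $F$ we get $U_K\bigl(U_K(F)\bigr) = U_K(F)$, i.e. $U_K$ is a projection. Therefore $U_K : \X \to \X$ is a continuous linear projection with image $\X_K$, and $\X = \X_K \oplus \Y_K$ with $\Y_K = \ker U_K$, which is (\ref{form470}). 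The only nonroutine point in the whole argument is the $\lambda$-almost-everywhere change-of-variable justification in the invariance step; everything else is formal group-averaging once $U_K$ is known to be a bounded operator on $\X$.
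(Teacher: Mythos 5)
Your argument is correct and follows essentially the same route as the paper: both rest on the translation-invariance identity $\Phi_{\sigma}(U_K(F)) = U_K(F)$ for $\sigma \in \Aut(\oQ/K)$ together with the fixed-point characterization (\ref{form360}) of $\X_K$ from Lemma \ref{lemcomp1}. The only cosmetic difference is that the paper verifies $U_K^2 = U_K$ by a direct double integration over $\Aut(\oQ/K)$, whereas you deduce it from the facts that $\iim U_K = \X_K$ and that $U_K$ restricts to the identity on $\X_K$; these are interchangeable.
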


\begin{proof}  If $F$ belongs to $\X$ and $\sigma$ is in $\Aut(\oQ/K)$, then by the translation invariance of the Haar measure 
$\nu_K$ we have
\begin{equation}\label{form500}
\begin{split}
\Phi_{\sigma}\bigl(U_K(F)\bigr)(y) &= \int_{\Aut(\oQ/K)} F\bigl(\tau^{-1} \sigma^{-1} y\bigr)\ \dnu_K(\tau)\\
						    &= \int_{\Aut(\oQ/K)} F\bigl(\tau^{-1} y\bigr)\ \dnu_K(\tau)\\
                                                            &= U_K(F)(y).
\end{split}
\end{equation}
As (\ref{form500}) holds for all $\sigma$ in $\Aut(\oQ/K)$, we get
\begin{equation*}\label{form505}
\begin{split}
U_K\bigl(U_K(F)\bigr)(y) &= \int_{\Aut(\oQ/K)} \int_{\Aut(\oQ/K)} F\bigl(\tau^{-1} \sigma^{-1} y\bigr)\ \dnu_K(\tau)\ \dnu_K(\sigma)\\
	&= \int_{\Aut(\oQ/K)} U_K(F)(y)\ \dnu_K(\sigma)\\
	&= U_K(F)(y),
\end{split}
\end{equation*}
and this shows that $U_K$ is a projection.

Because (\ref{form500}) holds for all $\sigma$ in $\Aut(\oQ/K)$, (\ref{form360}) implies that 
\begin{equation*}\label{form510}
\big\{U_K(F) : F \in \X\big\} \subseteq \X_K.
\end{equation*}
If $F$ belongs to $\X_K$ we appeal to (\ref{form360}) again, and conclude that $\Phi_{\tau}(F) = F$ for all $\tau$ in 
$\Aut(\oQ/K)$.  It follows from (\ref{ch3}) that $U_K(F) = F$, and therefore
\begin{equation*}\label{form515}
\big\{U_K(F) : F \in \X\big\} = \X_K.
\end{equation*}
We have shown that $U_K$ is a continuous, linear projection from $\X$ onto $\X_K$.  Therefore $\X_K$ is a complemented 
subspace of $\X$, and (\ref{form470}) follows immediately.
\end{proof}

Next we write
\begin{equation*}\label{form550}
S_K : \F \rightarrow \X_K
\end{equation*}
for the restriction of $U_K$ to the $\Q$-linear subspace $\F$. 

\begin{theorem}\label{thmcomp2}   Let $K/\Q$ be an algebraic extension of fields. 
Then $S_K$ is a continuous, linear projection of $\F$ onto $\D_K$, and we have the direct sum decomposition
\begin{equation}\label{form551}
\F = \D_K \oplus \E_K,
\end{equation}
where
\begin{equation*}\label{form552}
\E_K = \ker S_K = \big\{f_{\alpha} \in \F : S_K\bigl(f_{\alpha}\bigr) = 0 \big\}.
\end{equation*}
Moreover, we have $\closure \D_K = \X_K$, $\closure \E_K = \Y_K$, and both $\D_K$ and $\E_K$ are $\F$-closed.
\end{theorem}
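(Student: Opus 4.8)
The plan is to deduce everything from Theorem~\ref{thmcomp1}, Lemma~\ref{lemcomp1}, Theorem~\ref{thmsum1}, and Theorem~\ref{thmsum2}; the only step requiring genuine computation is to show that $S_K$, which a priori is merely a continuous linear map $\F \rightarrow \X_K$, actually takes values in the $\Q$-linear subspace $\D_K$.  Fix $f_{\alpha}$ in $\F$ and let $\Orb_K(\alpha) = \{\beta_1, \beta_2, \dots , \beta_L\}$ with $L = \delta_K(\alpha)$ as in (\ref{new125}).  The stabilizer of $f_{\alpha}$ under the action of $\Aut(\oQ/K)$ on $\F$ contains the open subgroup $\Aut(\oQ/K(\alpha))$, hence is open of index $L$ by Lemma~\ref{lemnew2}, so for each $\ell$ the set of $\tau$ with $\Phi_{\tau}(f_{\alpha}) = f_{\beta_{\ell}}$ is a left coset of this stabilizer and therefore has $\nu_K$-measure $L^{-1}$.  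Combining this with (\ref{form345}) and the definition (\ref{ch3}) of $U_K$, we obtain
\begin{equation*}
S_K\bigl(f_{\alpha}\bigr) = U_K\bigl(f_{\alpha}\bigr) = \int_{\Aut(\oQ/K)} f_{\tau \alpha}\ \dnu_K(\tau) = L^{-1} \sum_{\ell = 1}^{L} f_{\beta_{\ell}} = L^{-1} f_{\gamma},
\end{equation*}
where $\gamma = \beta_1 \beta_2 \cdots \beta_L$.  As noted immediately after the proof of Lemma~\ref{lemnew2}, $\gamma$ lies in $\spann_{\Q} \G_K$, so $L^{-1} f_{\gamma}$ lies in $\D_K$; hence $S_K(\F) \subseteq \D_K$.

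Next I would record the formal consequences.  Since $U_K^2 = U_K$ on $\X$, restriction gives $S_K^2 = S_K$, so $S_K$ is a linear projection.  Each $f_{\beta}$ in $\D_K$ is fixed by every $\Phi_{\tau}$ with $\tau \in \Aut(\oQ/K)$ by Lemma~\ref{lemcomp1}, hence is fixed by $U_K$ by (\ref{ch3}); thus the restriction of $S_K$ to $\D_K$ is the identity and $S_K(\F) = \D_K$.  Continuity of $S_K$ is inherited from that of $U_K$.  The decomposition $\F = \D_K \oplus \E_K$ with $\E_K = \ker S_K$ is the standard one attached to a linear projection: $f_{\alpha} = S_K(f_{\alpha}) + \bigl(f_{\alpha} - S_K(f_{\alpha})\bigr)$ with $f_{\alpha} - S_K(f_{\alpha}) \in \ker S_K$, and $\D_K \cap \E_K = \{0\}$ because $S_K$ is the identity on $\D_K$.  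For the closures, $\closure \D_K = \X_K$ is part of the statement of Theorem~\ref{thmcomp1} (and also follows from (\ref{form360})).  To see $\closure \E_K = \Y_K$, note that $\Y_K = \ker U_K$ is closed and contains $\E_K$, so $\closure \E_K \subseteq \Y_K$; conversely, if $F \in \Y_K$ and $f_{\alpha_n} \in \F$ with $f_{\alpha_n} \to F$ in $L^1$-norm, then $S_K(f_{\alpha_n}) = U_K(f_{\alpha_n}) \to U_K(F) = 0$, so $f_{\alpha_n} - S_K(f_{\alpha_n}) \in \E_K$ converges to $F$ and $F \in \closure \E_K$.  (Equivalently, one may invoke Theorem~\ref{thmsum1}, observing that the continuous extension of $S_K$ produced there must coincide with $U_K$ since $\F$ is dense in $\X$.)  Finally, $\D_K$ and $\E_K$ are $\F$-closed by Theorem~\ref{thmsum2} applied with $\hH = \D_K$, $\I = \E_K$, and $S = S_K$.

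The main obstacle is the first step: one must verify that the profinite average (\ref{ch3}) of $f_{\alpha} \in \F$ genuinely collapses to the finite orbit average $L^{-1}\sum_{\ell} f_{\beta_{\ell}}$, which rests on the openness of the point stabilizers in $\Aut(\oQ/K)$ together with the fact, recorded after Lemma~\ref{lemnew2}, that the product of the elements of $\Orb_K(\alpha)$ represents an element of $\spann_{\Q} \G_K$.  Once that is in hand, every remaining assertion of the theorem follows formally from results already established.
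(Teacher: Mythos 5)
Your proposal is correct and follows essentially the same route as the paper: compute $S_K(f_\beta) = L^{-1}f_\gamma$ with $\gamma$ the product of the orbit $\Orb_K(\beta)$ to see that $S_K$ maps $\F$ onto $\D_K$, then derive the decomposition, the closures, and $\F$-closedness from Theorems \ref{thmcomp1}, \ref{thmsum1}, and \ref{thmsum2}. Your explicit justification of the collapse of the Haar integral to the finite orbit average via the open stabilizer and its $L$ cosets of measure $L^{-1}$ is a detail the paper leaves implicit, and is correct.
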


\begin{proof}  Since $U_K$ is a continuous, linear projection, it is trivial that the restriction $S_K$ is a continuous,
linear projection from $\F$ onto a $\Q$-linear subspace contained in $\X_K$.  Let $\beta$ be an element of $\G$
such that $\delta_K(\beta) = L$, and let
\begin{equation}\label{form554}
\Orb_K(\beta) = \big\{\beta_1, \beta_2, \dots , \beta_L\big\}
\end{equation}
be the distinct elements in the orbit of $\beta$, as in (\ref{new125}).  Write
\begin{equation*}\label{from558}
\gamma = \beta_1 \beta_2 \cdots \beta_L,
\end{equation*}
so that $\gamma$ belongs to $\spann_{\Q} \G_K$, and 
\begin{equation*}\label{form560}
L^{-1} f_{\gamma}(y) = L^{-1} \sum_{\ell = 1}^L f_{\beta_{\ell}}(y)
\end{equation*}
belongs to $\D_K$. Using (\ref{form345}) and (\ref{form554}) we find that
\begin{equation*}\label{form565}
\begin{split}
S_K\bigl(f_{\beta}\bigr)(y) &= U_K\bigl(f_{\beta}\bigr)(y)\\
					 &= \int_{\Aut(\oQ/K)} \Phi_{\tau}\bigl(f_{\beta}\bigr)(y)\ \dnu_K(\tau)\\
					 &= L^{-1} \sum_{\ell = 1}^L f_{\beta_{\ell}}(y)\\
					 &= L^{-1} f_{\gamma}(y).
\end{split}
\end{equation*}
It follows that the image of $S_K$ is contained in $\D_K$.  But $U_K$, and therefore also $S_K$, fixes 
each element of $\D_K \subseteq \X_K$.  We conclude that $S_K$ is a continuous, linear 
projection of $\F$ onto $\D_K$.  The direct sum decomposition (\ref{form551}) follows immediately.

Because $S_K : \F \rightarrow \D_K$ is a continuous, linear projection, Theorem \ref{thmsum1} implies that
\begin{equation}\label{form570}
\X = \closure \D_K \oplus \closure \E_K.
\end{equation}
As $\closure \D_K = \X_K$ is the image of $U_K$, we conclude from (\ref{form570}) that 
\begin{equation*}\label{form575}
\closure \E_K = \Y_K = \ker U_K.
\end{equation*}
Then it follows from Theorem \ref{thmsum2} that  both $\D_K$ and $\E_K$ are $\F$-closed.
\end{proof}

Let $\beta$ be an element of $\G$ with $\delta_K(\beta) = L$.  Using (\ref{new125}) we have
\begin{equation*}\label{form580}
\Orb_K(\beta) = \big\{\beta_1, \beta_2, \dots , \beta_L\big\}.
\end{equation*}
As in the proof of Theorem \ref{thmcomp2} we find that
\begin{equation*}\label{form585}
S_K\bigl(f_{\beta}\bigr)(y) = L^{-1} f_{\gamma}(y),
\end{equation*}
where
\begin{equation*}\label{from590}
\gamma = \beta_1 \beta_2 \cdots \beta_L,
\end{equation*}
and $\gamma$ belongs to $\spann_{\Q} \G_K$.  We conclude that $f_{\beta}(y)$ belongs to the $\Q$-linear 
subspace $\E_K$ if and only if $\gamma = 1$.  That is, $f_{\beta}(y)$ belongs to $\E_K$ if and only if
\begin{equation*}\label{form595}
f_{\beta_1}(y) + f_{\beta_2}(y) + \cdots + f_{\beta_L}(y) = 0
\end{equation*}
at each point $y$ in $Y$.
\section{A sufficient condition for projections to commute}\label{ptc}

Let $K/\Q$ and $L/\Q$ be algebraic field extensions contained in a common algebraic closure $\oQ$.  Let
\begin{equation*}\label{chp5}
U_K : \X \rightarrow \X_K,\quad\text{and}\quad U_L : \X \rightarrow \X_L,
\end{equation*}
be the corresponding surjective, continuous, linear projections defined by (\ref{ch3}).  In this section we identify a condition on 
the fields $K$ and $L$ which implies that the projections $U_K$ and $U_L$ commute.  

\begin{lemma}\label{lemchp1}  Let $K/\Q$ and $L/\Q$ be algebraic field extensions, and let $U_K$, and $U_L$, be the 
corresponding continuous, linear projections defined by {\rm (\ref{ch3})}.  Assume that $\sigma$ is an automorphism in 
$\Aut(\oQ/\Q)$ such that $\sigma K = L$.  Then we have
\begin{equation}\label{chp45}
\Phi_{\sigma}\circ U_K = U_L\circ \Phi_{\sigma}.
\end{equation}  
\end{lemma}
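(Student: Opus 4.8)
The plan is to verify the operator identity (\ref{chp45}) by evaluating both sides on an arbitrary function $F$ in $\X$ at a $\lambda$-almost-every point $y$ in $Y$, unwinding the definitions of $U_K$, $U_L$, and $\Phi_\sigma$, and then using the translation invariance of Haar measure together with the relation $\sigma K = L$. First I would recall from (\ref{form315}) and (\ref{ch3}) that for $F$ in $\X$,
\begin{equation*}
U_K(F)(y) = \int_{\Aut(\oQ/K)} F(\tau^{-1} y)\ \dnu_K(\tau),\qquad \Phi_\sigma(G)(y) = G(\sigma^{-1} y),
\end{equation*}
so the left-hand side of (\ref{chp45}) applied to $F$ gives
\begin{equation*}
\bigl(\Phi_\sigma\circ U_K\bigr)(F)(y) = U_K(F)(\sigma^{-1} y) = \int_{\Aut(\oQ/K)} F\bigl(\tau^{-1}\sigma^{-1} y\bigr)\ \dnu_K(\tau).
\end{equation*}

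Next I would compute the right-hand side. Since $\sigma K = L$, conjugation by $\sigma$ is a topological group isomorphism $\Aut(\oQ/K) \to \Aut(\oQ/L)$, $\tau \mapsto \sigma\tau\sigma^{-1}$; because Haar measure on a compact group is unique once normalized to total mass $1$, this isomorphism pushes $\nu_K$ forward to $\nu_L$. Hence, substituting $\rho = \sigma\tau\sigma^{-1}$ (equivalently $\tau = \sigma^{-1}\rho\sigma$) in the integral defining $U_L\bigl(\Phi_\sigma(F)\bigr)$, I get
\begin{equation*}
\bigl(U_L\circ\Phi_\sigma\bigr)(F)(y) = \int_{\Aut(\oQ/L)} \Phi_\sigma(F)\bigl(\rho^{-1} y\bigr)\ \dnu_L(\rho) = \int_{\Aut(\oQ/L)} F\bigl(\sigma^{-1}\rho^{-1} y\bigr)\ \dnu_L(\rho),
\end{equation*}
and after the change of variable this becomes $\int_{\Aut(\oQ/K)} F\bigl(\sigma^{-1}\sigma\tau^{-1}\sigma^{-1} y\bigr)\ \dnu_K(\tau) = \int_{\Aut(\oQ/K)} F\bigl(\tau^{-1}\sigma^{-1} y\bigr)\ \dnu_K(\tau)$, which matches the expression for the left-hand side. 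Since both operators agree $\lambda$-almost everywhere on every $F$ in $\X$, they are equal as operators on $\X$.

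The only genuine point requiring care — and what I expect to be the main obstacle — is justifying that the conjugation map carries $\nu_K$ to $\nu_L$. This hinges on the uniqueness of normalized Haar measure on a compact group: one checks that $E \mapsto \nu_K(\sigma^{-1} E \sigma)$ is a left-invariant (indeed bi-invariant) Borel probability measure on $\Aut(\oQ/L)$, hence equals $\nu_L$. One should also confirm that conjugation by $\sigma$ is continuous (hence Borel-measurable) as a map of profinite groups, which is immediate, and that Fubini/the change-of-variables formula for pushforward measures applies — all standard. Everything else is bookkeeping with the definitions (\ref{form315}) and (\ref{ch3}).
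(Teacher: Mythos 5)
Your proof is correct and follows essentially the same route as the paper: both arguments unwind the definitions of $U_K$, $U_L$, and $\Phi_\sigma$, use the conjugation relation $\sigma^{-1}\Aut(\oQ/L)\sigma = \Aut(\oQ/K)$ together with the fact that conjugation by $\sigma$ carries $\nu_K$ to $\nu_L$ (by uniqueness of normalized Haar measure), and conclude by a change of variables in the integral. The only cosmetic difference is that the paper verifies the identity on the dense subspace $\F$ and computes $\Phi_{\sigma^{-1}}\circ U_L\circ\Phi_\sigma = U_K$ before applying $\Phi_\sigma$, whereas you evaluate both sides of (\ref{chp45}) directly on a general $F$ in $\X$.
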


\begin{proof}  The operators $U_K : \X \rightarrow \X_K$, $U_L : \X \rightarrow \X_L$, and 
$\Phi_{\sigma} : \X \rightarrow \X$, are all continuous, and $\F$ is dense in $\X$.  Hence it suffices to prove the identity
\begin{equation*}\label{chp47}
\Phi_{\sigma}\bigl(U_K\bigl(f_{\alpha}\bigr)\bigr)(y) = U_L\bigl(\Phi_{\sigma}\bigl(f_{\alpha}\bigr)\bigr)(y) 
\end{equation*}
for each function $f_{\alpha}$ in $\F$, and each point $y$ in $Y$.   Because $\sigma K = L$, the subgroups $\Aut(\oQ/K)$ and
$\Aut(\oQ/L)$ are conjugate subgroups of $\Aut(\Q/\Q)$.  More precisely, we find that
\begin{equation*}\label{chp49}
\sigma^{-1} \Aut(\oQ/L) \sigma = \Aut(\oQ/K).
\end{equation*}
Moreover, for each Borel subset $E \subseteq \Aut(\oQ/L)$ the normalized Haar measures $\nu_K$ and $\nu_L$ are related 
by the identity
\begin{equation*}\label{chp50}
\nu_K\bigl(\sigma^{-1} E \sigma\bigr) = \nu_L(E).
\end{equation*}
Then for each function $f_{\alpha}$ in $\F$, and each point $y$ in $Y$, we find that
\begin{align}\label{chp51}
\begin{split}
\Phi_{\sigma^{-1}}\bigl(U_L\bigl(\Phi_{\sigma}\bigl(f_{\alpha}\bigr)\bigr)\bigr)(y) 
	&= \int_{\Aut(\oQ/L)} f_{\alpha}\bigl(\sigma^{-1}\tau^{-1}\sigma y\bigr)\ \dnu_L(\tau)\\
	&= \int_{\sigma^{-1} \Aut(\oQ/L) \sigma}f_{\alpha}\bigl(\tau^{-1} y\bigr)\ \dnu_K(\tau)\\
	&= U_K\bigl(f_{\alpha}\bigr)(y).
\end{split}
\end{align}
By applying the operator $\Phi_{\sigma}$ to both sides of (\ref{chp51}), we get (\ref{chp45}).
\end{proof}

\begin{theorem}\label{thmchp1}  Let $K/\Q$ and $L/\Q$ be algebraic field extensions, and let $U_K$, and $U_L$, be the 
corresponding continuous, linear projections defined by {\rm (\ref{ch3})}.  If either $K/(K\cap L)$ is a (possibly infinite) 
Galois extension, or if $L/(K \cap L)$ is a (possibly infinite) Galois extension, then we have
\begin{equation}\label{chp65}
U_K\circ U_L = U_L\circ U_K.
\end{equation}  
\end{theorem}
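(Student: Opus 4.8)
The plan is to turn the operator identity (\ref{chp65}) into an identity between two push-forward measures on the compact group $A := \Aut(\oQ/(K\cap L))$, and then to recognize both of them as normalized Haar measure on a closed subgroup of $A$. Since the asserted identity is symmetric in $K$ and $L$, we may assume that $K/(K\cap L)$ is Galois, so that $H := \Aut(\oQ/K)$ is a closed \emph{normal} subgroup of $A$; write $G := \Aut(\oQ/L)$, which is a closed subgroup of $A$. Because $U_K$ and $U_L$ are continuous and $\F$ is dense in $\X$, it suffices, exactly as in the proof of Lemma \ref{lemchp1}, to verify that $U_K\bigl(U_L(f_\alpha)\bigr)(y) = U_L\bigl(U_K(f_\alpha)\bigr)(y)$ for every $f_\alpha$ in $\F$ and every $y$ in $Y$. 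Unwinding the definition (\ref{ch3}) twice and using $(\sigma\rho)^{-1} = \rho^{-1}\sigma^{-1}$, the left-hand side becomes the iterated integral $\int_H\int_G f_\alpha\bigl((\sigma\rho)^{-1}y\bigr)\,\dnu_L(\rho)\,\dnu_K(\sigma)$ and the right-hand side becomes $\int_G\int_H f_\alpha\bigl((\rho\sigma)^{-1}y\bigr)\,\dnu_K(\sigma)\,\dnu_L(\rho)$.

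Fix $y$ and put $\Psi(\tau) = f_\alpha(\tau^{-1}y)$, a bounded continuous function on $A$. Then the two displayed quantities equal $\int_A \Psi\,\dmu$ and $\int_A \Psi\,\dmu'$ respectively, where $\mu$ denotes the image of $\nu_K\times\nu_L$ under $(\sigma,\rho)\mapsto\sigma\rho$ and $\mu'$ the image of $\nu_L\times\nu_K$ under $(\rho,\sigma)\mapsto\rho\sigma$. So the theorem reduces to the claim $\mu = \mu'$. Both are Borel probability measures, and each is supported on the set $HG$; since $H$ is normal we have $HG = GH$, and $HG$ is compact, being a continuous image of $H\times G$, hence is a closed subgroup of $A$. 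Normality of $H$ also implies that for each $a$ in $A$ the conjugation $h\mapsto a h a^{-1}$ is a continuous automorphism of $H$, so by uniqueness of the normalized Haar measure $\nu_K$ is invariant under every such conjugation.

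The crux is to show that $\mu$ is invariant under left translation by each element of $HG$; the same reasoning applies to $\mu'$, and then both must coincide with the normalized Haar measure of the compact group $HG$, giving $\mu=\mu'$. Left translation invariance of $\mu$ under $h_0$ in $H$ is immediate from the left invariance of $\nu_K$ in the $\sigma$-variable. For left translation by $g_0$ in $G$, write $g_0\sigma = (g_0\sigma g_0^{-1})\,g_0$ with $g_0\sigma g_0^{-1}$ in $H$; replacing the $\sigma$-variable by $g_0\sigma g_0^{-1}$---legitimate by the conjugation invariance of $\nu_K$---moves the factor $g_0$ to the left of $\rho$, where it is absorbed by the left invariance of $\nu_L$. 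As $HG$ is generated by $H$ and $G$, this makes $\mu$ left $HG$-invariant; an entirely parallel computation, now using the identity $h_0\rho = \rho\,(\rho^{-1}h_0\rho)$ with $\rho^{-1}h_0\rho$ in $H$, shows $\mu'$ is left $HG$-invariant. Hence $\mu = \mu'$ and (\ref{chp65}) follows. One may note in passing that the infinite Galois correspondence for $\oQ/(K\cap L)$ identifies the fixed field of $HG$ as $K\cap L$, so in fact $HG = A$ and the common projection $U_K\circ U_L = U_L\circ U_K$ equals $U_{K\cap L}$.

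The one genuinely delicate point is to keep the substitutions straight: every move is an ordinary left translation inside $H$ or inside $G$ except for the single conjugation step, and it is precisely that step which invokes the hypothesis that one of $K/(K\cap L)$, $L/(K\cap L)$ is Galois---if neither is, the element $g_0\sigma g_0^{-1}$ need not lie in $H$ and the argument breaks down. The remaining measure-theoretic ingredients---interchanging the two integrations and performing the change of variables under $(\sigma,\rho)\mapsto\sigma\rho$---are routine once we have restricted to $f_\alpha$ in $\F$, since then all integrands are bounded and continuous and all measures are finite; alternatively one could work directly on $\X$, the necessary integrability being contained already in (\ref{ch1}).
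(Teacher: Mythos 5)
Your proof is correct. The engine is the same one that drives the paper's argument: the Galois hypothesis makes $H=\Aut(\oQ/K)$ normal in $A=\Aut(\oQ/(K\cap L))$, hence $\nu_K$ is invariant under conjugation by any element of $\Aut(\oQ/L)\subseteq A$, and that single substitution is what permits the two averaging operations to be interchanged. Where you diverge is in how you conclude. The paper isolates your conjugation step once and for all as Lemma \ref{lemchp1}: for $\sigma$ fixing $K\cap L$ one has $\Phi_{\sigma}\circ U_K=U_K\circ\Phi_{\sigma}$, a pointwise identity of integrals over $H$; it then simply integrates this identity over $\sigma\in\Aut(\oQ/L)$ and applies Fubini to equate the two iterated integrals. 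You instead encode the two iterated integrals as integrals against the pushforward measures $\mu$ and $\mu'$ on the compact subgroup $HG=GH$, verify left $HG$-invariance of each, and invoke uniqueness of normalized Haar measure. Your route is a little longer and imports one extra piece of machinery (Haar uniqueness on $HG$, together with the regularity needed to apply it --- harmless here, since $\oQ$ is countable and these profinite groups are metrizable), but it yields a bonus the paper does not record: because the closed subgroup $HG$ has fixed field $K\cap L$, you get $HG=A$, hence $U_K\circ U_L=U_{K\cap L}$ and consequently $\X_K\cap\X_L=\X_{K\cap L}$, with the intersection complemented by the single averaging operator over $\Aut(\oQ/(K\cap L))$. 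One stylistic caution: in your verification that $\mu'$ is invariant under $h_0\in H$, the element $\rho^{-1}h_0\rho$ depends on the outer integration variable $\rho$, so the inner substitution is an ordinary left translation of $\nu_K$ by an element of $H$ (normality supplying membership), not a conjugation-invariance step; you have this right, but it is worth stating explicitly since it is the one place a reader might suspect an illegitimate variable-dependent change of measure.
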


\begin{proof}  We assume that $K/(K \cap L)$ is a Galois extension.  As in the proof of Lemma \ref{lemchp1}, it suffices to 
verify the identity
\begin{equation}\label{chp67}
U_K\bigl(U_L\bigl(f_{\alpha}\bigr)\bigr)(y) = U_L\bigl(U_K\bigl(f_{\alpha}\bigr)\bigr)(y)
\end{equation}
for each function $f_{\alpha}$ in $\F$, and each point $y$ in $Y$.  Let $\sigma$ be an automorphism in $\Aut(\oQ/K\cap L)$,
so that $\sigma K = K$.  Then by Lemma \ref{lemchp1} we have  
\begin{equation}\label{chp69}
\Phi_{\sigma}\bigl(U_K\bigl(f_{\alpha}\bigr)\bigr)(y) = U_K\bigl(\Phi_{\sigma}\bigl(f_{\alpha}\bigr)\bigr)(y) 
\end{equation}
for each function $f_{\alpha}$ in $\F$, and each point $y$ in $Y$.  Using (\ref{ch3}), the identity (\ref{chp69}) can be written as
\begin{equation}\label{chp71}
\int_{\Aut(\oQ/K)} f_{\alpha}\bigl(\tau^{-1} \sigma^{-1} y \bigr)\ \dnu_K(\tau)
	= \int_{\Aut(\oQ/K)} f_{\alpha}\bigl(\sigma^{-1} \tau^{-1} y \bigr)\ \dnu_K(\tau).
\end{equation}
As $\Aut(\oQ/L) \subseteq \Aut(\oQ/K\cap L)$, we can integrate both sides of (\ref{chp71}) over automorphisms $\sigma$ in 
the subgroup $\Aut(\oQ/L)$.  Applying Fubini's theorem, we arrive at the identity
\begin{align}\label{chp75}
\begin{split}
\int_{\Aut(\oQ/L)}\ \bigg\{&\int_{\Aut(\oQ/K)} f_{\alpha}\bigl(\tau^{-1} \sigma^{-1} y \bigr)\ \dnu_K(\tau)\bigg\}\ \dnu_L(\sigma)\\
	&= \int_{\Aut(\oQ/L)}\ \bigg\{\int_{\Aut(\oQ/K)} f_{\alpha}\bigl(\sigma^{-1} \tau^{-1} y \bigr)\ \dnu_K(\tau)\bigg\}\ \dnu_L(\sigma)\\
	&= \int_{\Aut(\oQ/K)}\ \bigg\{\int_{\Aut(\oQ/L)} f_{\alpha}\bigl(\sigma^{-1} \tau^{-1} y \bigr)\ \dnu_L(\sigma)\bigg\}\ \dnu_K(\tau).
\end{split}
\end{align}
Equality between the first and third iterated integrals in (\ref{chp75}) is exactly the identity (\ref{chp67}).  This verifies
(\ref{chp65}).
\end{proof}

\section{Statement and proof of the main theorem}\label{finalproof}

For each algebraic extension $K/\Q$ we continue to write
\begin{equation*}\label{part0}
S_K : \F \rightarrow \D_K
\end{equation*}
for the restriction of $U_K$ to the $\Q$-linear subspace $\F$.  We prove the following general result, which includes 
Theorem \ref{thmintro3} and Theorem \ref{thmintro4} as special cases.

\begin{theorem}\label{thmpart1}  Let $K_1, K_2, \dots , K_N$, be a collection of fields such that
\begin{equation*}\label{part5}
\Q \subseteq K_n \subseteq \oQ,\quad\text{for each $n = 1, 2, \dots , N$},
\end{equation*}
and for each pair of integers $n_1 \not= n_2$, either
\begin{equation}\label{part10}
K_{n_1}/(K_{n_1} \cap K_{n_2}) \quad\text{is a (possibly infinite) Galois extension,}
\end{equation}
or
\begin{equation}\label{part15}
K_{n_2}/(K_{n_1} \cap K_{n_2}) \quad\text{is a (possibly infinite) Galois extension.}
\end{equation}
For each $n = 1, 2, \dots , N$, let
\begin{equation*}\label{part20}
\F = \D_{K_n} \oplus \E_{K_n}
\end{equation*}
be the direct sum decomposition determined by the surjective, continuous, linear projection
\begin{equation*}\label{part25}
S_{K_n} : \F \rightarrow \D_{K_n},\quad\text{with}\quad \E_{K_n} = \ker S_{K_n}.
\end{equation*}
Then for $0 \le M \le N$, the $\Q$-linear subspace
\begin{equation*}\label{part30}
\D_{K_1} + \D_{K_2} + \cdots + \D_{K_M} + \E_{K_{M+1}} + \E_{K_{M+2}} + \cdots + \E_{K_N}
\end{equation*}
is $\F$-closed. 
\end{theorem}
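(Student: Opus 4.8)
The plan is to build a single continuous linear projection of $\F$ whose image is precisely the subspace in the statement, and then to apply Theorem~\ref{thmsum2}. First I would record the ingredients already in hand. By Theorem~\ref{thmcomp2}, for each $n$ the map $S_{K_n}\colon \F \to \D_{K_n}$ is a continuous linear projection with kernel $\E_{K_n}$; hence $T_{K_n} := I - S_{K_n}$ is a continuous linear projection of $\F$ onto $\E_{K_n}$ with $\ker T_{K_n} = \D_{K_n}$ and $\iim T_{K_n} = \E_{K_n}$. The hypotheses (\ref{part10}) and (\ref{part15}) are exactly those of Theorem~\ref{thmchp1}, which yields $U_{K_m}\circ U_{K_n} = U_{K_n}\circ U_{K_m}$ on $\X$; since $U_{K_n}$ carries $\F$ into $\D_{K_n}\subseteq\F$ and $S_{K_n}$ is its restriction, this descends to $S_{K_m}\circ S_{K_n} = S_{K_n}\circ S_{K_m}$ on $\F$ for all $m\neq n$.

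Next I would set $P_n = S_{K_n}$ for $1\le n\le M$ and $P_n = T_{K_n}$ for $M < n \le N$. A short computation using only $I$ and the relation $S_{K_m}\circ S_{K_n} = S_{K_n}\circ S_{K_m}$ shows that $P_1,\dots,P_N$ pairwise commute: the $S$'s commute among themselves; $(I-S_{K_m})\circ(I-S_{K_n})$ is visibly symmetric in $m,n$, so the $T$'s commute among themselves; and $S_{K_m}\circ(I-S_{K_n}) = S_{K_m} - S_{K_n}\circ S_{K_m} = (I-S_{K_n})\circ S_{K_m}$. Writing $\hH_n = \iim P_n$ and $\I_n = \ker P_n$, we have $\F = \hH_n \oplus \I_n$ for each $n$, with $\hH_n = \D_{K_n}$, $\I_n = \E_{K_n}$ when $n\le M$, and $\hH_n = \E_{K_n}$, $\I_n = \D_{K_n}$ when $n > M$. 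Now Theorem~\ref{thmform1}, applied to the commuting family $P_1,\dots,P_N$, gives the direct sum decomposition $\F = (\hH_1 + \cdots + \hH_N)\oplus(\I_1\cap\cdots\cap\I_N)$ together with the projection $W_N = I - (I-P_1)\circ\cdots\circ(I-P_N)$ onto $\hH_1 + \cdots + \hH_N$ with kernel $\I_1\cap\cdots\cap\I_N$; and since each $P_n$ is continuous, $W_N$ is continuous. By construction $\hH_1 + \cdots + \hH_N = \D_{K_1} + \cdots + \D_{K_M} + \E_{K_{M+1}} + \cdots + \E_{K_N}$. Finally, Theorem~\ref{thmsum2}, applied with $\hH = \hH_1 + \cdots + \hH_N$, $\I = \I_1\cap\cdots\cap\I_N$, and $S = W_N$, shows that $\hH$ is $\F$-closed, which is the assertion. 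The degenerate cases are immediate: $N = 1$ is Theorem~\ref{thmcomp2}, and when $M = 0$ or $M = N$ the argument is unchanged, while the empty sum $\{0\}$ is trivially $\F$-closed.

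I do not expect any genuine obstacle here: all the substantive work — continuity of the $S_{K_n}$, closedness of the image of a continuous projection, and above all the commutation $U_{K_m}\circ U_{K_n} = U_{K_n}\circ U_{K_m}$ forced by the pairwise Galois hypothesis — has already been carried out in Sections~\ref{comp} and~\ref{ptc}. The only points demanding a moment's care are verifying that the commutation relation transfers from $\X$ down to the invariant subspace $\F$ (immediate from $U_{K_n}(\F)\subseteq\D_{K_n}\subseteq\F$), and checking that the \emph{mixed} family $P_1,\dots,P_N$ built from the $S$'s and the complementary $T$'s is still pairwise commuting — which is exactly the elementary identity displayed above. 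Everything else is the bookkeeping of feeding these commuting projections into Theorem~\ref{thmform1} and then into Theorem~\ref{thmsum2}.
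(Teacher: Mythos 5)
Your proposal is correct and follows essentially the same route as the paper's own proof: define $T_{K_n} = I - S_{K_n}$, obtain commutativity of the $S_{K_n}$ from Theorem~\ref{thmchp1} by restriction of the $U_{K_n}$, verify that the mixed family of $S$'s and $T$'s pairwise commutes, and then feed the resulting commuting projections into Theorem~\ref{thmform1} and Theorem~\ref{thmsum2}. No gaps; the elementary commutation identities you display are exactly the ones the paper invokes.
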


\begin{proof}  For each $n = 1, 2, \dots , N$, we define
\begin{equation*}\label{part65}
T_{K_n} : \F \rightarrow \F
\end{equation*}
by
\begin{equation*}\label{part70}
T_{K_n}\bigl(f_{\alpha}\bigr)(y) = f_{\alpha}(y) - S_{K_n}\bigl(f_{\alpha}\bigr)(y).
\end{equation*}
Then each map
\begin{equation*}\label{part75}
T_{K_n} : \F \rightarrow \E_{K_n}
\end{equation*}
is a surjective, continuous, linear projection, such that
\begin{equation*}\label{part80}
\D_{K_n} = \ker T_{K_n}.
\end{equation*}
The hypotheses (\ref{part10}) and (\ref{part15}), together with Theorem \ref{thmchp1}, imply that for each $m \not= n$ the 
projections $U_{K_m}$ and $U_{K_n}$ commute.  Hence the restricted projections $S_{K_m}$ and $S_{K_n}$ also 
commute.  One easily checks that $S_{K_m}$ and $T_{K_n}$ commute, and similarly that $T_{K_m}$ and $T_{K_n}$
commute.  Thus we have the collection of continuous projects
\begin{equation}\label{part85}
\big\{S_{K_1}, S_{K_2}, \dots , S_{K_M}, T_{K_{M+1}}, T_{K_{M+2}}, \dots , T_{K_N}\big\},
\end{equation}
and each pair of projections from (\ref{part85}) commute.  These correspond to the collection of direct sum decompositions
\begin{equation*}\label{part90}
\F = \D_{K_m} \oplus \E_{K_m},\quad\text{for $m = 1, 2, \dots , M$},
\end{equation*}
and
\begin{equation*}\label{part93}
\F = \E_{K_n} \oplus \D_{K_n},\quad\text{for $n = M+1, M+2, \dots , N$}.
\end{equation*}
We have verified all the hypotheses of Theorem \ref{thmform1} with
\begin{equation*}\label{part95}
\hH_m = \D_{K_m},\quad\text{and}\quad \I_m = \E_{K_m},\quad\text{for $m = 1, 2, \dots , M$},
\end{equation*} 
and with
\begin{equation*}\label{part100}
\hH_n = \E_{K_n},\quad\text{and}\quad \I_n = \D_{K_n},\quad\text{for $n = M+1, M+2, \dots , N$}.
\end{equation*} 
From the conclusion of Theorem \ref{thmform1} we get the direct sum decomposition
\begin{equation*}\label{part105}
\begin{split}
\F =  \bigl(\D_{K_1} + \D_{K_2} &+ \cdots + \D_{K_M} + \E_{K_{M+1}} + \E_{K_{M+2}} + \cdots + \E_{K_N}\bigr) \\
	& \oplus \bigl(\E_{K_1} \cap \E_{K_2} \cap \cdots \cap \E_{K_M} \cap \D_{K_{M+1}} \cap \D_{K_{M+2}} \cap \D_{K_N}\bigr).
\end{split}
\end{equation*}
We also learn from Theorem \ref{thmform1} that the unique, surjective, linear projection 
\begin{equation*}\label{prf67}
W_N : \F \rightarrow \D_{K_1} + \D_{K_2} + \cdots + \D_{K_M} + \E_{K_{M+1}} + \E_{K_{M+2}} + \cdots + \E_{K_N},
\end{equation*}
such that 
\begin{equation*}\label{prf69}
\ker W_N = \E_{K_1} \cap \E_{K_2} \cap \cdots \cap \E_{K_M} \cap \D_{K_{M+1}} \cap \D_{K_{M+2}} \cap \D_{K_N},
\end{equation*}
is continuous.  Because $W_N$ is continuous, it follows from Theorem \ref{thmsum2} that
\begin{equation*}\label{prf77}
\D_{K_1} + \D_{K_2} + \cdots + \D_{K_M} + \E_{K_{M+1}} + \E_{K_{M+2}} + \cdots + \E_{K_N}
\end{equation*}
is $\F$-closed.  This completes the proof of Theorem \ref{thmpart1}.
\end{proof}



\begin{thebibliography}{1}

\bibitem{all2009}
D.~Allcock and J.~D.~Vaaler,
\newblock A Banach space determined by the Weil height,
\newblock {\em Acta Arith.}, 136 (2009), 279--298.

\bibitem{bombieri2006}
E.~Bombieri and W.~Gubler,
\newblock {\em Heights in Diophantine Geometry},
\newblock Cambridge U. Press, New York, 2006.

\bibitem{dlmf2008}
A.~C.~de~la Maza and E.~Friedman.
\newblock Heights of algebraic numbers modulo multiplicative group actions.
\newblock {\em J. Number Theory}, 128 (2008), 2199--2213.

\bibitem{FM2012}
P.~Fili and Z.~Miner,
\newblock Norms extremal with respect to the Mahler measure,
\newblock {\em J. Number Theory} 132 (2012), 275--300.

\bibitem{FM2013}
P.~Fili and Z.~Miner,
\newblock Orthogonal decomposition of the space of algebraic numbers and Lehmer's problem,
\newblock {\em J. Number Theory} 133 (2013), 3941--3981.

\bibitem{lamperti1958}
J.~Lamperti
\newblock On the isometries of certain function-spaces,
\newblock {\em Pacific J. Math.} 8, (1958), 459--466.

\bibitem{megginson}
R.~E.~Megginson,
\newblock {\em An Introduction to Banach Space Theory},
\newblock Springer-Verlag, New York, 1998.

\bibitem{vaaler2012}
J.~D.~Vaaler,
\newblock Heights on groups and small multiplicative dependencies,
\newblock {\em Trans. Amer. Math. Soc.} 366 (2014), no. 6, pp. 3295--3323.

\end{thebibliography}
\end{document}